\documentclass[12pt]{amsart}

\usepackage{amsthm,amsmath,amssymb, amsaddr,datetime}
\usepackage[margin=1.25in]{geometry}
\usepackage{hyperref}

\usepackage{tikz}
\usepackage{makecell} %for \Gape to space rows in table
\renewcommand{\deg}{\overline{d}} %degree sequence

\newcommand{\Ecc}{\operatorname{Ecc}}
\newcommand{\ecc}{\operatorname{ecc}}
\newcommand{\diam}{\operatorname{diam}}
\newcommand{\rad}{\operatorname{rad}}
\newcommand{\degree}{\operatorname{deg}}

\theoremstyle{plain}
\newtheorem{theo}{Theorem}
\newtheorem{lemma}[theo]{Lemma}
\newtheorem{prop}[theo]{Proposition}
\newtheorem{obs}[theo]{Observation}

\theoremstyle{definition}
\newtheorem{defi}[theo]{Definition}

\theoremstyle{remark}
\newtheorem*{rem}{Remark}

%%%%%%%%%%%%%%%%%%%%%%%%%%%%%%%%%%%%%%%%%%%%%%%%%%%%%%%

\title[Eccentricity Sums in Trees]{Eccentricity Sums in Trees}

\author{Heather Smith\textsuperscript{1} \qquad \qquad L\'{a}szl\'{o} Sz\'{e}kely\textsuperscript{1} \qquad \qquad Hua Wang\textsuperscript{2}}
\address{\textsuperscript{1}
\small Department of Mathematics, University of South Carolina, Columbia, SC 29208, USA \\
\textsuperscript{2} Department of Mathematical Sciences, Georgia Southern University, Statesboro, GA 30460, USA
}
\thanks{Email:  {\tt smithhc5@math.sc.edu} (Heather Smith), {\tt szekely@math.sc.edu} (L\'aszl\'o Sz\'ekely), {\tt hwang@georgiasouthern.edu} (Hua Wang)}

\begin{document}

\begin{abstract}
  The eccentricity of a vertex, $\ecc_T(v) = \max_{u\in T} d_T(v,u)$, was one of the first, distance-based, tree invariants studied. The total eccentricity of a tree, $\Ecc(T)$, is the sum of eccentricities of its vertices. We determine extremal values and characterize extremal tree structures for the ratios $\Ecc(T)/\ecc_T(u)$, $\Ecc(T)/\ecc_T(v)$, $\ecc_T(u)/\ecc_T(v)$, and $\ecc_T(u)/\ecc_T(w)$ where $u,w$ are leaves of $T$ and $v$ is in the center of $T$. 
  In addition, we determine the tree structures that minimize and maximize total eccentricity among trees with a given degree sequence.  

  % keywords are optional
  \bigskip\noindent \textbf{Keywords:}  %separate by semi-colons
  eccentricity; extremal problems; degree sequence; greedy caterpillar; greedy tree; level-greedy tree
\end{abstract}

\maketitle
%%%%%%%%%%%%%%%%%%%%%%%%%%%%%%%%%%%%%%%%%%%
\vspace{-0.2in}
\section{Introduction}

The \textit{eccentricity} of a vertex $v$ in a connected graph $G$ is defined  in terms of the distance function as 
$$ \ecc_G(v) := \max_{u \in V(G)} d(u,v) . $$
The radius of $G$, $\rad(G)$, is the minimum eccentricity while the diameter, $\diam(G)$, is the maximum. 
The center, $C(G)$, is the collection of vertices whose eccentricity is exactly $\rad(G)$. 

We focus our attention on trees, where the center has at most two vertices \cite{jordan} and the diameter is realized by a leaf. 
We also explore the \textit{total eccentricity} of a tree $T$, defined as the sum of the vertex eccentricities:  \[\Ecc(T) := \sum_{z \in V(T)} \ecc_T(z) .\]

For a fixed tree $T$ with $v \in C(T)$ and any $z\in V(T)$,
\[\min_{u\in L(T)}\frac{\Ecc(T)}{\ecc_T(u)} \leq \frac{\Ecc(T)}{\ecc_T(z)}\leq \frac{\Ecc(T)}{\ecc_T(v)}\]
where $L(T)$ denotes the leaf set of $T$. This motivates the study in Section 2 of the extremal values and structures for the following ratios where $u,w\in L(T)$ and $v\in C(T)$,
 \[\frac{\Ecc(T)}{\ecc_T(v)}, \quad \frac{\Ecc(T)}{\ecc_T(u)}, \quad \frac{\ecc_T(u)}{\ecc_T(v)}, \quad \text{ and } \quad\frac{\ecc_T(u)}{\ecc_T(w)}.\] 
The results are  analogous to similar studies on distance in \cite{barefoot} and on the number of subtrees in \cite{part1,part2}. 
As in those papers,  the behavior of ratios is more delicate than that of their numerators or denominators.

For a graph with $n$ vertices, the total eccentricity is $n$ times the average eccentricity.
 Dankelmann and Mukwembi \cite{mukw} gave sharp upper bounds on the average eccentricity of graphs in terms of independence number, chromatic number, domination number, as well as connected domination number. For trees with $n$ vertices, Dankelmann, Goddard, and Swart \cite{danke} showed that the path maximizes $\Ecc(T)$.  In Section 3, we prove that the star minimizes $\Ecc(T)$ among trees with a given order. Turning our attention to trees with a fixed degree sequence, we prove that the ``greedy" caterpillar maximizes $\Ecc(T)$ while the ``greedy" tree minimizes $\Ecc(T)$. This provides further information about the total eccentricity of ``greedy" trees across degree sequences. 

From here forward, we assume that $T$ is a tree with $n$ vertices. Given two vertices $a,b \in V(T)$, $P(a,b)$ will be the unique path between $a$ and $b$ in $T$.

%%%%%%%%%%%%%%%%%%%%%%%%%%%%%%%%%%%%%%%%%%%%%%%%%%%%%%%
\section{Extremal ratios}

In this section, we consistently use the letters $u,w$ to denote leaf vertices while $v$ is a center vertex. Before delving into ratios, the following observation from \cite{jordan} is given without proof, and will be used many times. The next observation is a simple calculation which will be useful in our proofs.

\begin{obs}\label{cl:gen}
The center, $C(T)$, contains at most 2 vertices. These vertices are located in the middle of a maximum length path, $P$. If $\{v\}=C(T)$, $v$ divides $P$ into two paths, each of length $\rad(T)$. If  $\{v,z\}=C(T)$, the removal of $vz\in E(T)$ will divide $P$ into two paths, each of length $\rad(T)-1$. 	
\end{obs}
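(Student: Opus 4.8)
The plan is to reduce everything to the behavior of eccentricity along a single fixed diametral path. Fix a longest path $P = P(v_0,v_d)$, where $d=\diam(T)$, and write its vertices as $v_0,v_1,\dots,v_d$. The entire statement follows once I establish the key lemma that for every vertex $u\in V(T)$,
\[ \ecc_T(u) = \max\{\, d(u,v_0),\ d(u,v_d)\,\}, \]
i.e.\ a farthest vertex from $u$ can always be taken to be an endpoint of $P$. The inequality $\ecc_T(u)\ge \max\{d(u,v_0),d(u,v_d)\}$ is immediate, so the content is the reverse bound.

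To prove the reverse bound, let $y$ be a farthest vertex from $u$ and let $x$ be the vertex of $P$ nearest to $u$, so that $d(u,v_0)=d(u,x)+d(x,v_0)$ and $d(u,v_d)=d(u,x)+d(x,v_d)$. Let $c$ be the last common vertex (starting from $u$) of the paths $P(u,y)$ and $P(u,v_d)$; then $d(u,y)=d(u,c)+d(c,y)$ and, since the two branches at $c$ are disjoint, $d(y,v_0)=d(c,y)+d(c,v_0)$ and $d(y,v_d)=d(c,y)+d(c,v_d)$. The key leverage is that $P$ is a longest path, so $d(y,v_0)\le d$ and $d(y,v_d)\le d$. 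Substituting whichever of these keeps $c$ on the far side of $x$ and eliminating $d(c,y)$ bounds $d(u,y)$ above by either $d(u,v_0)$ or $d(u,v_d)$, after a short split according to whether $c$ lies on the $u$--$x$ portion or the $x$--$v_d$ portion of $P(u,v_d)$. This comparison against the diameter is the main obstacle; once it is done the lemma is proved.

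With the lemma in hand the rest is bookkeeping. For a vertex $v_i$ on $P$ we get $\ecc_T(v_i)=\max\{i,\,d-i\}$, which is minimized precisely at the middle of $P$ with value $\lceil d/2\rceil$. For an arbitrary vertex $u$, the triangle inequality gives $d(u,v_0)+d(u,v_d)\ge d(v_0,v_d)=d$, so $\ecc_T(u)=\max\{d(u,v_0),d(u,v_d)\}\ge \lceil d/2\rceil$; hence $\rad(T)=\lceil d/2\rceil$ and the minimum is attained on $P$. Conversely, if $\ecc_T(u)=\lceil d/2\rceil$ then the distances $d(u,v_0),d(u,v_d)$ must sum to exactly $d$ (their sum is at least $d$ while each is at most $\lceil d/2\rceil$), which forces $u$ onto $P$ at a midpoint; here I would argue that any off-path vertex inflates the sum by twice its distance to $P$.

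Finally I would split on the parity of $d$. If $d$ is even, the unique midpoint $v_{d/2}$ is the only minimizer, so $C(T)=\{v_{d/2}\}$ and this vertex splits $P$ into two subpaths each of length $d/2=\rad(T)$. If $d$ is odd, the minimizers are the two adjacent vertices $v_{(d-1)/2}$ and $v_{(d+1)/2}$, so $C(T)$ is a single edge; deleting this edge cuts $P$ into two subpaths each of length $(d-1)/2=\rad(T)-1$. In either case $|C(T)|\le 2$ and the center sits at the middle of $P$, as claimed.
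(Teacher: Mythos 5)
The paper does not actually prove this observation: it is stated as a classical fact from Jordan's work and explicitly ``given without proof,'' so there is no in-paper argument to compare against. Your proof supplies the missing argument by the standard route: first show that every vertex's eccentricity is attained at an endpoint of a fixed diametral path $P$, then read everything off from $\ecc_T(v_i)=\max\{i,d-i\}$ and the fact that an off-path vertex inflates $d(u,v_0)+d(u,v_d)$ above $d$ by twice its distance to $P$. The reduction is correct, and the concluding bookkeeping (radius equals $\lceil d/2\rceil$, one central vertex for even $d$, two adjacent ones for odd $d$, and the resulting splits of $P$ into pieces of length $\rad(T)$ resp.\ $\rad(T)-1$) is all fine.

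One intermediate step is false as written, though repairable. With $c$ defined as the last common vertex of $P(u,y)$ and $P(u,v_d)$, the identity $d(y,v_0)=d(c,y)+d(c,v_0)$ can fail: it requires the $c$--$y$ branch to be disjoint from $P(c,v_0)$, which your definition of $c$ does not guarantee. Concretely, take $P=v_0v_1v_2v_3v_4$ with a leaf $y$ attached at $v_1$ and a leaf $u$ attached at $v_3$; then $c=v_3$, $d(y,v_0)=2$, but $d(c,y)+d(c,v_0)=6$. The failure occurs exactly when $c$ coincides with the projection $x$ of $u$ onto $P$ and the path from $c$ to $y$ initially runs along $P$ toward $v_0$. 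The standard fix is to let $c$ instead be the vertex of $P$ nearest to $y$, so that both decompositions $d(y,v_0)=d(y,c)+d(c,v_0)$ and $d(y,v_d)=d(y,c)+d(c,v_d)$ hold with $d(c,v_0)+d(c,v_d)=d$, and then use $d(u,y)\le d(u,c)+d(c,y)$ together with whichever of the two diameter inequalities bounds $d(c,y)$ by the distance from $c$ to the endpoint on the same side as $u$; alternatively, in the degenerate case just swap the roles of $v_0$ and $v_d$. With that repair the key lemma, and hence the whole observation, is fully proved.
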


\begin{obs}\label{path_ecc}
For any path $P$ with $y$ edges and $y+1$ vertices, 
\[\Ecc(P)= \sum_{z\in V(P)} \ecc_P(z) = \begin{cases}\frac{3}{4}y^2+y & \text{if $y$ is even}\\   \frac{3}{4}y^2+y+\frac{1}{4} & \text{if $y$ is odd.} \end{cases}\] 
%\[ \sum_{v\in V(P)} \ecc_P(v) = \begin{cases} 2(y+(y-1) + \ldots + (\frac{y}{2}+1) ) + \frac{y}{2} = \frac{3}{4}y^2+y & \text{if $y$ is even}\\ 2(y+(y-1) + \ldots + \frac{y+1}{2} )  = \frac{3}{4}y^2+y+1 & \text{if $y$ is odd} \end{cases}.\] 
\end{obs}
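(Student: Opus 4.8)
The plan is to set up coordinates on the path and reduce the claim to summing an arithmetic progression. Label the vertices of $P$ as $v_0, v_1, \dots, v_y$ in order along the path, so that $d_P(v_i, v_j) = |i-j|$. The crucial observation is that for a path, the eccentricity of any vertex is simply the distance to the farther endpoint: $\ecc_P(v_i) = \max(d_P(v_i,v_0), d_P(v_i,v_y)) = \max(i, y-i)$. This is immediate from the fact (recorded in Observation~\ref{cl:gen}) that the diameter of a tree is realized by a leaf, the two leaves here being $v_0$ and $v_y$. With this, the total eccentricity becomes the explicit finite sum
\[
\Ecc(P) = \sum_{i=0}^{y} \max(i,\, y-i).
\]

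The main step is to evaluate this sum, and here I would split on the parity of $y$, exactly as the two cases in the statement suggest. For each $i$ the value $\max(i, y-i)$ equals $y-i$ when $i \le y/2$ and equals $i$ when $i \ge y/2$, so by the symmetry $i \mapsto y-i$ the sum is essentially twice the contribution of the larger half. When $y = 2m$ is even, the unique central vertex $v_m$ contributes $m$ and the two symmetric halves each contribute $\sum_{i=m+1}^{2m} i$; when $y = 2m+1$ is odd, there is no fixed point and the sum is exactly $2\sum_{i=m+1}^{2m+1} i$. In both cases the inner sum is a routine arithmetic-series evaluation using $\sum_{i=1}^{N} i = \tfrac{N(N+1)}{2}$.

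The final step is to substitute back $m = y/2$ in the even case and $m = (y-1)/2$ in the odd case and simplify, which produces $3m^2 + 2m = \tfrac34 y^2 + y$ and $3m^2 + 5m + 2 = \tfrac34 y^2 + y + \tfrac14$ respectively, matching the two branches of the claimed formula. A slightly slicker and less case-heavy variant, which I would mention as an alternative, is to write $\max(i, y-i) = \tfrac12\bigl(y + |2i - y|\bigr)$ and observe that $\sum_{i=0}^{y} |2i-y|$ is $2(2 + 4 + \cdots + y)$ when $y$ is even and $2(1 + 3 + \cdots + y)$ when $y$ is odd; the latter telescopes neatly via the identity that the sum of the first $k$ odd numbers is $k^2$. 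There is no real obstacle here: the only thing to watch is bookkeeping in the parity split and the off-by-one on the index range, so the proof is a short and entirely elementary calculation.
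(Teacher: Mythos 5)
Your proof is correct, and since the paper states this observation without proof (calling it ``a simple calculation''), your elementary evaluation of $\sum_{i=0}^{y}\max(i,y-i)$ by parity split is exactly the intended argument. Both case computations and the slicker $\max(i,y-i)=\tfrac12(y+|2i-y|)$ variant check out against the stated formulas.
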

%%%%%%%%%%%%%%%%%%%%%%%%%%%%%%%%%%%%%%%%%%%%%%

\subsection{On the extremal values of \texorpdfstring{$\frac{\Ecc(T)}{\ecc_T(v)}$}{Ecc(T)/ecc(v)} where \texorpdfstring{$v\in C(T)$}{v is in the center}}

\begin{theo}\label{2.1} Let $T$ be a tree with $n\geq 2$ vertices. For any $v\in C(T)$, we have 
\begin{equation*} \label{2n-1}
 \frac{\Ecc(T)}{\ecc_T(v)} \leq 2n-1. 
 \end{equation*} 
 For $n\geq 3$, equality holds if and only if $T$ is a star centered at $v$.
\end{theo}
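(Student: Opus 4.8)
The plan is to bound each eccentricity appearing in $\Ecc(T)$ against $\ecc_T(v)=\rad(T)$ and to exploit that only one vertex, namely $v$ itself, attains the small value. First I would record the pointwise estimate that drives everything: for any vertex $z$ and any vertex $x$, the triangle inequality along the tree gives $d(z,x)\le d(z,v)+d(v,x)\le 2\,\ecc_T(v)$, since both $d(z,v)$ and $d(v,x)$ are at most $\ecc_T(v)=\rad(T)$. Taking the maximum over $x$ yields $\ecc_T(z)\le 2\,\ecc_T(v)$ for every $z\in V(T)$.

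Summing this over the $n$ vertices, and separating off the contribution of $v$ (which is exactly $\ecc_T(v)$), gives $\Ecc(T)=\ecc_T(v)+\sum_{z\ne v}\ecc_T(z)\le \ecc_T(v)+(n-1)\cdot 2\,\ecc_T(v)=(2n-1)\,\ecc_T(v)$. Dividing by $\ecc_T(v)>0$ (which is legitimate since $\rad(T)\ge 1$ when $n\ge 2$) produces the claimed inequality. One could equivalently route the bound through $\diam(T)\le 2\rad(T)$, which also follows from Observation~\ref{cl:gen}, but the direct triangle-inequality estimate is cleaner and avoids splitting into the one- and two-center cases.

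For the equality analysis, the easy direction is to verify that the star centered at $v$ with $n\ge 3$ vertices achieves the bound: there $\ecc_T(v)=1$ and each of the $n-1$ leaves has eccentricity $2$, so $\Ecc(T)=1+2(n-1)=2n-1$. For the converse, I would trace back through the inequalities: equality in the sum forces $\ecc_T(z)=2\,\ecc_T(v)$ for every one of the $n-1$ vertices $z\ne v$. The key step is then to rule out $\rad(T)\ge 2$. To do so, apply this conclusion to a neighbor $z$ of $v$ (which exists since $n\ge 2$): on one hand $\ecc_T(z)=2\,\ecc_T(v)$, while on the other hand $d(z,x)\le d(z,v)+d(v,x)\le 1+\ecc_T(v)$ for all $x$, so $\ecc_T(z)\le \ecc_T(v)+1$. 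Combining gives $2\,\ecc_T(v)\le \ecc_T(v)+1$, i.e. $\ecc_T(v)\le 1$; since $\ecc_T(v)=\rad(T)\ge 1$, we get $\rad(T)=1$. A radius-one tree has every vertex adjacent to $v$, which in a tree means $T$ is exactly the star $K_{1,n-1}$ centered at $v$.

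I expect the only real obstacle to be the converse of the equality statement, specifically pinning down that the extremal tree is precisely the star rather than merely satisfying some radius or diameter constraint; the neighbor-of-$v$ estimate $\ecc_T(z)\le \ecc_T(v)+1$ is the crucial squeeze that collapses the radius to $1$. It is also worth noting why the characterization requires $n\ge 3$: for $n=2$ the tree is a single edge with two central vertices, the ratio equals $2<2n-1=3$, so equality fails there and the degenerate case is correctly excluded.
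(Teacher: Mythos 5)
Your proof is correct and follows essentially the same route as the paper's: bound every eccentricity by $2\ecc_T(v)$ (the paper phrases this as $\diam(T)\le 2\rad(T)$, you derive it directly from the triangle inequality), sum, and then use the fact that eccentricities of adjacent vertices differ by at most one to collapse the equality case to $\rad(T)=1$, i.e.\ the star. Your write-up is somewhat more explicit in the equality analysis, but the underlying argument is identical.
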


\begin{proof}
Let $T$ be an arbitrary tree with $v\in C(T)$. It is known that for any tree $T$, $\diam(T) \leq 2\rad(T)$ and for any vertex $z\in V(T)$, $\rad(T)\leq \ecc_T(z) \leq \diam(T)$. Because $\ecc_T(v)=\rad(T)$, the bound in the theorem is proved as follows:
\[\Ecc(T) \leq \ecc_T(v) + (n-1) \diam(T)  \leq (2n-1) \rad(T).\]
Equality holds precisely when $T$ has $\ecc_T(z)=2\ecc_T(v)$ for all vertices $z\neq v$. Because the eccentricities of adjacent vertices differ by at most 1, $\ecc_T(v)=1$ and $\ecc_T(z)=2$ for all $z\neq x$ which is only true for the star. 
\end{proof}

\begin{theo}\label{2.2} Let $T$ be a tree with $n\geq 2$ vertices. Let $k$ and $i$ be nonnegative integers with $0\leq i\leq 2k$ and $n=k^2+i$. For any $v\in C(T)$, we have 
\begin{equation*} \label{alul}
 \frac{\Ecc(T)}{\ecc_T(v)} \geq \begin{cases}    n-3+2k+\frac{i}{k} & {\rm if\ } 0\leq i\leq k \cr n-3+2k+\frac{i+1}{k+1}   & {\rm if\ } k+1\leq i\leq 2k   .\end{cases} 
 \end{equation*} 
{For $n\geq 4$,} equality holds if and only if $T$ is a tree whose longest path has 2x vertices ($x=k$ in the first case and $x=k+1$ in the second) and each other vertex is adjacent to one of the two center vertices of this path.
% $x=k$ in the first case and $x=k+1$
%in the second case. 
For $i=k$, the two bounds agree and both values for $x$ provide an extremal tree.
\end{theo}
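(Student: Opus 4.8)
The plan is to reduce the ratio to a one-variable optimization in the radius. Write $r := \ecc_T(v) = \rad(T)$. I will first bound $\Ecc(T)$ from below purely in terms of $n$ and $r$, and then minimize the resulting expression over the integer values $r$ can take. For the lower bound, fix a diametral path $P$ with endpoints $a,b$, having $\diam(T)=:D$ edges, where $D\in\{2r-1,2r\}$ by Observation~\ref{cl:gen}. The key tool is the elementary fact that in a tree the farthest vertex from any $z$ is an endpoint of every diametral path, so that $\ecc_T(z)=\max\{d(z,a),d(z,b)\}$. Decomposing $z$ by its projection $p_{m(z)}$ onto $P$ and its height $h(z):=d(z,P)$, this becomes $\ecc_T(z)=h(z)+\max\{m(z),\,D-m(z)\}$. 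Summing, the $D+1$ vertices of $P$ contribute exactly $\Ecc(P)$, which Observation~\ref{path_ecc} evaluates, while each remaining vertex contributes at least $r+1$, since $h(z)\ge 1$ and $\max\{m,D-m\}\ge\lceil D/2\rceil=r$.

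Carrying out the two cases, the two-center case $D=2r-1$ (so $|V(P)|=2r$) gives
\[ \Ecc(T)\ \ge\ (3r^2-r)+(n-2r)(r+1)\ =\ r^2+(n-3)r+n, \]
whereas the single-center case $D=2r$ yields the strictly larger value $r^2+(n-1)r+n-1$. Hence in all cases $\Ecc(T)\ge r^2+(n-3)r+n$, so that
\[ \frac{\Ecc(T)}{\ecc_T(v)}\ \ge\ r+(n-3)+\frac{n}{r}\ =:\ g(r). \]

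It remains to minimize $g$ over positive integers. Since $n=k^2+i$ with $0\le i\le 2k$ places $\sqrt n$ in $[k,k+1)$, the identity $g(r)-g(r+1)=\frac{n}{r(r+1)}-1$ shows $g$ is strictly decreasing through $r=k$ (as $r(r+1)<k^2\le n$ there) and strictly increasing from $r=k+1$ on (as $r(r+1)\ge(k+1)(k+2)>n$), whence $\min_r g(r)=\min\{g(k),g(k+1)\}$. A direct computation gives $g(k)=n-3+2k+\frac ik$ and $g(k+1)=n-3+2k+\frac{i+1}{k+1}$, with $g(k)-g(k+1)=\frac{i-k}{k(k+1)}$; this selects $g(k)$ when $i\le k$ and $g(k+1)$ when $i\ge k$, matching the two stated cases and giving both at $i=k$.

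Finally I read off equality by tracking tightness in each step. Equality in the lower bound forces $D=2r-1$ (the single-center estimate is strictly larger), together with every off-path vertex having height $1$ and projecting onto a center vertex, i.e. exactly the structure in the statement with $2r$ path-vertices. Equality in the minimization forces $r=k$ when $i<k$, $r=k+1$ when $i>k$, and permits either when $i=k$; one checks these radii are realizable, since attaching pendant vertices at the two centers leaves $D$, and hence $r$, unchanged when $k\ge 2$ (equivalently $n\ge 4$). The subtlest point, and the main thing to get right, is precisely this equality analysis: confirming that the two tightness requirements are simultaneously met by exactly the described trees, and correctly matching $x=k$ versus $x=k+1$ to the ranges of $i$, including the shared boundary $i=k$.
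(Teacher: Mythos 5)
Your proof is correct and follows the same overall strategy as the paper's: both arguments reduce the problem to minimizing $x+(n-3)+\frac{n}{x}$ over positive integers near $\sqrt n$ and then compare the values at $k$ and $k+1$ via the sign of $\frac{i-k}{k(k+1)}$. The one genuine difference is how the single-center case is eliminated. The paper handles it structurally: it first disposes of the star, then relocates all leaves at distance $\rad(T)$ from $v$ (except one) to be pendant at $v$, shows this strictly decreases the ratio while keeping $\ecc_T(v)$ fixed, and thereby reduces to trees with $|C(T)|=2$. You instead compute the lower bound in both parity cases of the diameter and observe that $D=2r$ yields $r^2+(n-1)r+n-1$, which strictly exceeds the two-center bound $r^2+(n-3)r+n$; so the single-center case (star included) is absorbed with no extra work and can never attain equality. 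Your route is a bit more self-contained, and you also make the realizability check ($2x\le n$ once $n\ge 4$) explicit, which the paper leaves implicit in this theorem. One small wording issue: your stated ``key tool'' --- that the farthest vertex from any $z$ is an endpoint of \emph{every} diametral path --- is not literally true, since other vertices may also attain $\ecc_T(z)$; the correct statement, and the one you actually use, is that $\ecc_T(z)=\max\{d(z,a),d(z,b)\}$ for the endpoints $a,b$ of any diametral path. This is a standard fact but is worth a one-line justification, since the paper only invokes the weaker observation that vertices \emph{on} a longest path realize their eccentricities along it.
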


\begin{proof}
Let $T$ be a tree with $n\geq 3$ vertices and let $v\in C(T)$. If $T$ is a star then $\frac{\Ecc(T)}{\ecc_T(v)} = 2n-1$ which is strictly greater than the bounds in the theorem. 
%For $n=3$, we have $k=1$ and $i=2$ in which case $\frac{\Ecc(T)}{\ecc_T(v)} = 5 \geq \frac{7}{2} = n-3+2k+\frac{i+1}{k+1}$.
%The only tree on $n=2$ vertices with $k=i=1$ also respects the lower bound \eqref{alul}: $\frac{\Ecc(T)}{\ecc_T(v)} = 2 =n-3+2k +\frac{i}{k}$.
%When $n\geq 4$ and $T$ is a star, $\frac{\Ecc(T)}{\ecc_T(v)} = 2n-1$. 
%In particular, $k\geq 2$ and thus $2k\leq k^2 + i= n$ which yields the following comparisons with the bounds in \eqref{alul}. (Note that $i\leq k$ in the first and $i\leq 2k$ in the second.)
%\begin{equation*} n-3+2k+\frac{i}{k} \leq n-3+2k+1 = n+2k-2 < 2n-1. \end{equation*}
%\begin{equation*} n-3+2k+\frac{i+1}{k+1} < n-3+2k+2 = n+2k-1 \leq 2n-1.\end{equation*}
%In particular, the bounds in \eqref{alul} are not tight when $T$ is a star.

For the remainder of the proof, we consider the case when $T$ is not a star. By Observation~\ref{cl:gen}, there is a maximum-length path $P:=P(u,w)$ with $v$ in the middle and $d(u,v)=\ecc_T(v)$. We now consider two cases, based upon the size of $C(T)$.
%Without loss of generality, $\ecc_{T}(v)$ is realized by the path $P(v,u)$.  

 If $C(T)=\{v\}$, then both $P(u,v)$ and $P(v,w)$ have length $\ecc_T(v)$. 
 Let $S$ be the non-empty set $\{w'\in L(T): w'\neq u \text{ and } d(v,w')= \ecc_T(v)\}$. 
 Create a new tree $F$ from $T$ by detaching each leaf $w'\in S$  and appending each one to $v$. This tree is different from $T$ because $T$ was not a star. For any $z\in V(T)$, $\ecc_T(z) \geq \ecc_F(z)$.
 Further, for each $w'\in S$, $\ecc_T(w')>\ecc_F(w')$. As a result, $\Ecc(T) > \Ecc(F)$.
 As for $v\in C(T)$, $\ecc_F(v)=\ecc_T(v)=d_T(u,v)$ because $u\not\in S$. 
 The length of the longest path in $F$ is one less than the length of the longest path in $T$ which implies $v\in C(F)$ and $|C(F)|=2$. 
Altogether, we see $\frac{\Ecc(T)}{\ecc_T(v)} > \frac{\Ecc(F)}{\ecc_F(v)}$.
 %Once there is only one leaf with this property, the neighbor of $v$ on $P(v,u)$ will also be in the center.) 
Hence, to minimize $\frac{\Ecc(T)}{\ecc_T(v)}$, it suffices to consider those trees with two center vertices.
                          
Suppose $|C(T)|=2$ and let $x:=\ecc_T(v)$. Here, the path $P$ has length $2x-1$ and the vertices on $P$ realize their eccentricities along this path since it has maximum length. Explicitly calculating the eccentricities of the vertices on $P$, using Observation~\ref{path_ecc}, and lower bounding all other eccentricities by $x+1$, we have  
\[ \frac{\Ecc(T)}{\ecc_T(v)} \geq \frac{1}{x}\left( \left(3x^2-x\right)+ (n-2x)(x+1) \right)= x + (n-3) + \frac{n}{x} =: f(x)\]
%which implies
%\begin{eqnarray} \frac{\Ecc(T)}{\ecc_T(v)} \geq \frac{ x^2 + (n-3)x + n }{x} = x + (n-3) + \frac{n}{x} =: f(x). \label{f(x)}\end{eqnarray}
Equality holds  if and only if each vertex not on $P$ is a neighbor of one of the center vertices of $P$, as in Fig.~\ref{fig:min_tv1}.

\begin{figure}[htbp]
\centering
    \begin{tikzpicture}[scale=1]
        \node[fill=black,circle,inner sep=1pt] (t1) at (0,0) {};
        \node[fill=black,circle,inner sep=1pt] (t2) at (1,0) {};
        \node[fill=black,circle,inner sep=1pt] (t3) at (2,0) {};
        \node[fill=black,circle,inner sep=1pt] (t4) at (3,0) {};
        \node[fill=black,circle,inner sep=1pt] (t5) at (4,0) {};
        \node[fill=black,circle,inner sep=1pt] (t6) at (5,0) {};
        \node[fill=black,circle,inner sep=1pt] (t7) at (6,0) {};
        \node[fill=black,circle,inner sep=1pt] (t8) at (7,0) {};
        
        \node[fill=black,circle,inner sep=1pt] (t15) at (2.65,-.5) {};
        \node[fill=black,circle,inner sep=1pt] (t17) at (3.35,-.5) {};
        \node[fill=black,circle,inner sep=1pt] (t18) at (3.65,-.5) {};
        \node[fill=black,circle,inner sep=1pt] (t20) at (4.35,-.5) {}; 
        
        \draw (t1)--(t2);
        \draw (t3)--(t6);
        \draw (t7)--(t8);
        \draw [dashed] (t2)--(t3);
        \draw [dashed] (t6)--(t7);
        
        \draw (t15)--(t4);
        \draw (t17)--(t4);
	 \draw (t18)--(t5);
        \draw (t20)--(t5);           
        
        \node at (3,-.4) {$\ldots$};
        \node at (4,-.4) {$\ldots$};
        \node at (4,.2) {$v$};
        \node at (7,.2) {$u$};
        \node at (0,.2) {$w$};
        \node at (6,-.35) {$\substack{\underbrace{\hspace{5.2 em}}\\{x-1 \text{ vertices}}}$};
        \node at (1,-.35) {$\substack{\underbrace{\hspace{5.2 em}}\\ {x-1 \text{ vertices}}}$};
        \node at (3.5,-.8) {$\substack{\underbrace{\hspace{5 em}}\\ {n-2x \text{ vertices}}}$};

        \end{tikzpicture}
\caption{A tree minimizing $\frac{\Ecc(T)}{\ecc_T(v)}$.}\label{fig:min_tv1}
\end{figure}
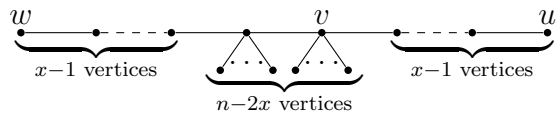 

To determine the value of $x$ which minimizes $f(x)$, we use the first derivative test. Because  
$ f'(x) = 1 - \frac{n}{x^2} $
is negative for $x< \sqrt{n}$ and positive for $x> \sqrt{n}$, the minimum of $f(x)$ is obtained when
$$ x \in\{ \left\lfloor \sqrt{n} \right\rfloor, \left\lceil \sqrt{n} \right\rceil\} \subseteq \{k,k+1\}. $$
Because $f(k+1)-f(k) = \frac{k-i}{k(k+1)}$, $f(k)\leq f(k+1)$ precisely when $i\geq k$ with equality when $i=k$, as stated in the theorem.
%\begin{eqnarray*}
%f(k+1)-f(k) &=& \frac{n}{k+1} + 1 - \frac{n}{k} 
%%\\
%%&=& \frac{nk + k(k+1) -n(k+1)}{k(k+1)}\\
%%&=& \frac{k^2+k -n}{k(k+1)} \\
%%&=& \frac{k^2+k - k^2-i}{k(k+1)} \\
%= \frac{k-i}{k(k+1)}.
%\end{eqnarray*}
%\textcolor{blue}{ Can we leave this paragraph out?} The eccentricity of the center vertices dictates a maximum-length path $P$. However, equality in \eqref{alul} requires that each vertex not on $P$ has minimum eccentricity, exactly $\ecc_T(v)+1$. This is realized when all vertices not on $P$ are  adjacent to one of the center vertices of $P$. The result is the tree in Fig.~\ref{fig:min_tv1}.
 \end{proof}

%%%%%%%%%%%%%%%%%%%%%%%%%%%%%%%%%%%%%%%%%%%%%%%%%%%%%%%
\subsection{On the extremal values of \texorpdfstring{$\frac{\Ecc(T)}{\ecc_T(u)}$}{Ecc(T)/ecc(u)} where \texorpdfstring{$u\in L(T)$}{u is a leaf}}

\begin{theo}
Let $T$ be a tree on {$n\geq 8$} vertices. Let $k$ and $i$ be integers with $0\leq i\leq 2k$ and $2n-1=k^2+i$. For any $u\in L(T)$, we have 

\begin{equation*} \label{leafub}
 \frac{\Ecc(T)}{\ecc_T(u)} \leq \begin{cases}    2n+1-2k-\frac{i}{k} & {\rm if\ } 0\leq i\leq k \cr 2n+1-2k-\frac{i+1}{k+1}   & {\rm if\ } k+1\leq i\leq 2k.\end{cases} 
 \end{equation*} 
Equality holds if and only if $T$ is a tree with longest path $P=z_1z_2\ldots z_{2x-1}$  ($x=k$ in the first case and $x=k+1$ in the second), leaf $u$ adjacent to $z_x$, and each other vertex adjacent to either $z_2$ or $z_{2x-2}$. For $i=k$, the two bounds agree and both values of $x$ will provide an extremal tree.
\end{theo}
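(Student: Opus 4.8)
The plan is to fix $x := \ecc_T(u)$ and, exactly as in the proof of Theorem~\ref{2.2}, reduce the problem to a one-variable optimization. First I would record the constraints that fixing the denominator imposes. Writing $z$ for the unique neighbour of the leaf $u$, the farthest vertex from $u$ is reached through $z$, so $\ecc_T(z)=x-1$; since $\ecc_T(z)\ge\rad(T)$ this gives $\rad(T)\le x-1$, and then $\diam(T)\le 2\rad(T)$ yields $D:=\diam(T)\le 2x-2$. Thus fixing the denominator $x$ caps the diameter, which is the mechanism that controls the numerator.

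Next I would bound $\Ecc(T)$ from above for this fixed $x$. Fix a diametral path $Q=a_0a_1\cdots a_D$. The key observation is that every vertex of $Q$ attains its eccentricity at an endpoint of $Q$: a branch hanging off $a_\ell$ has length at most $\min(\ell,D-\ell)$ (otherwise $Q$ would not be a longest path), so it can never beat the distance to the nearer endpoint, giving $\ecc_T(a_j)=\max(j,D-j)$. Hence $\sum_{z\in V(Q)}\ecc_T(z)=\Ecc(Q)$, which Observation~\ref{path_ecc} evaluates. Provided $x<D$ the leaf $u$ cannot lie on $Q$ (its endpoints have eccentricity $D>x$ and its interior vertices are not leaves), so the remaining $n-D-2$ vertices each contribute at most $D$, and
\[\Ecc(T)\le \Ecc(Q)+x+(n-D-2)D=:G(D).\]
Using Observation~\ref{path_ecc} one checks $G$ is increasing for $D\le 2n-2$, so on the feasible range $x\le D\le 2x-2$ it is maximized at $D=2x-2$, yielding $\Ecc(T)\le -x^2+(2n+1)x-(2n-1)$; the odd-$D$ case is dominated by the even value $D=2x-2$ by the same monotonicity. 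The boundary case $x=D$ (where $u$ may sit on $Q$) has to be treated separately, and a direct estimate shows it gives a strictly smaller ratio.

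Dividing by $x$ then reduces the problem to maximizing
\[h(x):=(2n+1)-x-\frac{2n-1}{x}\]
over integers. The first-derivative test places the maximum at $x=\sqrt{2n-1}$, so the optimal integer is $k$ or $k+1$, where $2n-1=k^2+i$. Comparing the two candidates via $h(k)-h(k+1)=\frac{k-i}{k(k+1)}$ selects $x=k$ when $i\le k$ and $x=k+1$ when $i\ge k$ (and both when $i=k$), which is exactly the case split in the statement; this endgame mirrors that of Theorem~\ref{2.2}.

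Finally, for the equality characterization I would trace back through each inequality. Equality forces $D=2x-2$, forces $u$'s neighbour to be a center vertex (so $u$ hangs off the middle $z_x$ of $Q=z_1\cdots z_{2x-1}$ and $\ecc_T(u)=x$), and forces every vertex off $Q\cup\{u\}$ to have eccentricity exactly $D$ without lengthening the diameter. A short computation shows a pendant vertex at $z_j$ has eccentricity $\max(j,2x-j)$, which equals $D=2x-2$ precisely for $j\in\{2,2x-2\}$, exceeds $D$ for $j=1$, and is smaller than $D$ for $3\le j\le 2x-3$; moreover any nontrivial branch either increases the diameter or creates internal vertices of deficient eccentricity. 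This pins down the claimed tree. I expect this last step to be the main obstacle: carefully ruling out all the alternative ways of placing the extra vertices while simultaneously keeping $\diam(T)=2x-2$, $\ecc_T(u)=x$, and all extra eccentricities equal to $D$. The hypothesis $n\ge 8$ forces $x\ge 3$, so that $z_2\ne z_{2x-2}$ and $u\notin Q$, keeping this extremal structure nondegenerate.
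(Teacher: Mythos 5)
Your proposal is correct and follows essentially the same route as the paper: cap the diameter at $2x-2$ in terms of $x=\ecc_T(u)$, bound $\Ecc(T)$ by the diametral-path contribution plus $x$ plus $(n-D-2)D$, optimize the resulting one-variable function at $x\in\{k,k+1\}$, and trace equality back to pendant vertices at $z_2$ and $z_{2x-2}$. The differences are cosmetic (you get $D\le 2x-2$ from $\rad(T)\le\ecc_T(z)=x-1$ and $\diam\le 2\rad$, the paper from $d(w,w')=d(w,z)+d(z,w')\le 2x-2$ using $d(u,z)\ge 1$), but two spots need to be filled in. First, your boundary case $x=D$, which absorbs both paths and leaves that are diametral endpoints, is dispatched by an unwritten ``direct estimate''; the estimate $\Ecc(T)/D\le n-D/4+O(1/D)$ does work for $n\ge 8$, but the paper's route is cleaner: for a diametral leaf $w$ one has $\Ecc(T)/\ecc_T(w)\le\Ecc(T)/\ecc_T(u)$ for every other leaf $u$, so the non-path case reduces entirely to leaves off $P$, and paths are handled by the explicit value $\frac34 n+\frac12$. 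Second, the equality characterization silently assumes the extremal tree exists, which requires $n-2x\ge 0$ for the optimal $x\in\{k,k+1\}$; the paper verifies $2x\le n$ by hand for $8\le n\le 12$ and via $k\ge 5$ for $n\ge 13$, and this is one of the reasons the hypothesis $n\ge 8$ is there (not only to ensure $x\ge 3$).
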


\begin{proof}
Let $T$ be a tree with $n\geq 8$ vertices and $u\in L(T)$. If $T$ is a path, then $\frac{\Ecc(T)}{\ecc_T(u)} \leq \frac{3}{4}n+\frac{1}{2}$ which is strictly smaller than the bounds in the theorem.

For the remainder of the proof, we will suppose $T$ is not a path. Fix $P:=P(w,w')$ to be a maximum-length path in $T$. For any leaf $u\in L(T)$ different from $w$ and $w'$, $\frac{\Ecc(T)}{\ecc_T(w)}\leq \frac{\Ecc(T)}{\ecc_T(u)}$. Because we are interested in an upper bound, it suffices to consider leaves $u$ which are not on $P$. 

Let $u$ be a leaf of $T$ which is not on $P$ and let $x:=\ecc_T(u)$. There is a unique path from $u$ to the closest vertex on $P$, say $z$. Then $d(u,w)=d(u,z) + d(z,w)$ and $d(u,w')=d(u,z)+d(z,w')$. Since $d(u,w)$ and $d(u,w')$ are at most $x$ and $d(u,z)\geq 1$, we have $d(w,w')=d(w,z)+d(z,w') \leq 2x-2$. Because $P$ has maximum length, every vertex on $P$ realizes its eccentricity along $P$. Every vertex not on $P$ has eccentricity at most $2x-2$. This gives the upper bound
\begin{align*}
\frac{\Ecc(T)}{ \ecc_T(u)} &\leq 
\frac{1}{x} \left(x + \left(\frac{3}{4}(2x-2)^2 + (2x-2) \right)+ (n-2x)(2x-2) \right)
\\ & =  \frac{-x^2 + (2n+1)x - (2n-1)}{x}.\end{align*}
Equality is achieved precisely when $T$ is a tree with longest path $P$ on $2x-1$ vertices, $u$ is adjacent to the middle vertex  of $P$, and all other vertices have eccentricity $2x-2$. Such a tree $T$ is shown in Fig.~\ref{fig:max_tu1}. 

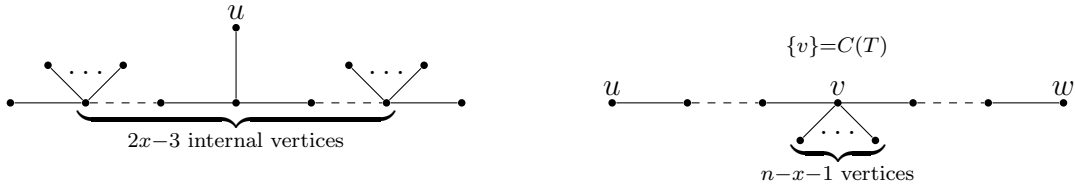
\begin{figure}[htbp]
\centering
    \begin{tikzpicture}[scale=1]
        \node[fill=black,circle,inner sep=1pt] (t1) at (0,0) {};
        \node[fill=black,circle,inner sep=1pt] (t2) at (1,0) {};
        \node[fill=black,circle,inner sep=1pt] (t3) at (2,0) {};
        \node[fill=black,circle,inner sep=1pt] (t4) at (3,0) {};
        \node[fill=black,circle,inner sep=1pt] (t5) at (4,0) {};
        \node[fill=black,circle,inner sep=1pt] (t6) at (5,0) {};
        \node[fill=black,circle,inner sep=1pt] (t7) at (6,0) {};
        \node[fill=black,circle,inner sep=1pt] (t8) at (3,1) {};
        
        \node[fill=black,circle,inner sep=1pt] (t15) at (.5,.5) {};
        \node[fill=black,circle,inner sep=1pt] (t17) at (1.5,.5) {};
        
        \node[fill=black,circle,inner sep=1pt] (t25) at (5.5,.5) {};
        \node[fill=black,circle,inner sep=1pt] (t27) at (4.5,.5) {};
        
        \draw (t1)--(t2);
        \draw (t3)--(t5);
        \draw (t6)--(t7);
        \draw (t4)--(t8);
        \draw [dashed] (t2)--(t3);
        \draw [dashed] (t5)--(t6);
        
        \draw (t15)--(t2);
        \draw (t17)--(t2);
        \draw (t25)--(t6);
        \draw (t27)--(t6);

        \node at (1,.4) {$\ldots$};
        \node at (5,.4) {$\ldots$};
        
        \node at (3,1.2) {$u$};

        \node at (3,-.35) {$\substack{\underbrace{\hspace{10.2 em}}\\ 2x-3 \text{ internal vertices}}$};
        
        \begin{scope}[shift={+(8,0)}]
        
        \node[fill=black,circle,inner sep=1pt] (t1) at (0,0) {};
        \node[fill=black,circle,inner sep=1pt] (t2) at (1,0) {};
        \node[fill=black,circle,inner sep=1pt] (t3) at (2,0) {};
        \node[fill=black,circle,inner sep=1pt] (t4) at (3,0) {};
        \node[fill=black,circle,inner sep=1pt] (t5) at (4,0) {};
        \node[fill=black,circle,inner sep=1pt] (t6) at (5,0) {};
        \node[fill=black,circle,inner sep=1pt] (t7) at (6,0) {};
        
        \node[fill=black,circle,inner sep=1pt] (t15) at (2.5,-.5) {};
        \node[fill=black,circle,inner sep=1pt] (t17) at (3.5,-.5) {};
        
        \draw (t1)--(t2);
        \draw (t3)--(t5);
        \draw (t6)--(t7);
        \draw [dashed] (t2)--(t3);
        \draw [dashed] (t5)--(t6);
        
        \draw (t15)--(t4);
        \draw (t17)--(t4);

        \node at (3,-.4) {$\ldots$};
        \node at (6,.2) {$w$};
        \node at (0,.2) {$u$};
        \node at (3,.2) {$v$};
      	\node at (3,.7) {$^{\{v\}=C(T)}$};
        \node at (3,-0.8) {$\substack{\underbrace{\hspace{3em}} \\ n-x-1 \text{ vertices} }$};
        \end{scope}
        \end{tikzpicture}
\caption{A tree (left) which maximizes $\frac{\Ecc(T)}{\ecc_T(u)}$ and a tree (right) which minimizes $\frac{\Ecc(T)}{\ecc_T(u)}$.}\label{fig:max_tu1}
\end{figure} 

It remains to determine the value of $x$ that will maximize $ \frac{\Ecc(T)}{\ecc_T(u)}$ for trees with the structure described above. The first derivative test shows that $f(x)$ is maximized when
$$ x \in\{\left\lfloor \sqrt{2n-1} \right\rfloor, \left\lceil \sqrt{2n-1} \right\rceil \}\subseteq \{k,k+1\}.$$ 
The larger of $f(k)$ and $f(k+1)$ gives the appropriate upper bound in \eqref{leafub}. In addition, we must require $2x\leq n$ in order to have a realizable tree. One can individually check that this is the case for $n\in \{8,9,\ldots, 12\}$. When $n\geq 13$, we have $k\geq 5$ in which case $0\leq  k^2-4k-3$ which implies $2x \leq 2(k+1)\leq n$. 
%\begin{eqnarray*}
%0&\leq & k^2-4k-3\\
%0&\leq & k^2+i+1-4(k+1)\\
%2x\;\leq\;2(k+1)&\leq& \frac{1}{2}( k^2+i+1) \;= \;n
%\end{eqnarray*}
%We can check to see that $2x\leq n$ for $n=8,\ldots,12$ individually. 
%\[\begin{array}{r|l|l|l}
%n&k&i&x\\ \hline
%8&3&6&4\\
%9&4&1&4\\
%10&4&3&4\\
%11&4&5&5\\
%12&4&7&5
%\end{array}\]
%\blue{In fact, the construction holds for  $n=4,6$. For $n=7$, we need $x=k$ instead of $k+1$ becuase $x=k+1$ is not realizable. For $n=3,5$, the path is optimal.}
%\textcolor{blue}{Remove?}
%If equality holds in \eqref{leafub}, then \eqref{totleafub} must be equality for the appropriate value of $x$. Because all values in \eqref{totleafub} are maximum, the bound is achieved exactly when $T$ has the structure shown in Fig.~\ref{fig:max_tu1} with $x=k$ or $x=k+1$ according to the value of $n$.
\end{proof}

\begin{theo}
Let $T$ be a tree of order $n\geq 5$. Let $k$ and $i$ be nonnegative integers with $0\leq i\leq 2k$ and $4n-4=k^2+i$. Then for any leaf $u$,
 \begin{equation} \label{leaflbeven}
 \frac{\Ecc(T)}{\ecc_T(u)} \geq 
 \begin{cases}  
 \frac{n-1}{2} + \frac{k}{2} + \frac{i}{4k} & \text{\textnormal{if $k$ is even }} \\ 
 \frac{n-1}{2} + \frac{k}{2} + \frac{i+1}{4(k+1)}  & \text{\textnormal{if $k$ is odd.}}
\end{cases}
\end{equation}
Equality holds if and only if $T$ is a tree with longest path $P$ of length $2x$ ($2x=k$ for the first case and $2x=k+1$ for the second) with all other vertices adjacent to the middle vertex of $P$ as shown in Fig.~\ref{fig:max_tu1}. When $i=k$, both bounds in \eqref{leaflbeven} give the same value and both give extremal structures.
\end{theo}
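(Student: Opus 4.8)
The plan is to fix a leaf $u$, set $\ell := \ecc_T(u)$, and choose a vertex $w$ with $d(u,w)=\ell$; since $u$ realizes its eccentricity at $w$, that vertex $w$ is itself a leaf and $Q:=P(u,w)$ is a path of length $\ell$. The engine of the lower bound is the pointwise estimate $\ecc_T(z)\ge\max\{d(z,u),d(z,w)\}$, valid for every $z$ because both $u$ and $w$ lie in $T$. First I would sum this over the $\ell+1$ vertices of $Q$: the vertex at distance $j$ from $u$ contributes $\max\{j,\ell-j\}$, so the partial sum is exactly $\Ecc(Q)$, which Observation~\ref{path_ecc} evaluates. For $z\notin Q$, writing $t:=d(z,Q)\ge 1$ gives $d(z,u)+d(z,w)=\ell+2t\ge\ell+2$, whence $\max\{d(z,u),d(z,w)\}\ge\lceil\ell/2\rceil+1$. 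Adding the $n-\ell-1$ off-path contributions yields
\[ \Ecc(T)\ \ge\ \Ecc(Q)+(n-\ell-1)\left(\left\lceil\tfrac{\ell}{2}\right\rceil+1\right)\ =:\ B(\ell), \]
so that $\Ecc(T)/\ecc_T(u)\ge B(\ell)/\ell$.

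Next I would minimize $B(\ell)/\ell$ over admissible $\ell$. For \emph{even} $\ell$, Observation~\ref{path_ecc} gives $\Ecc(Q)=\tfrac34\ell^2+\ell$ and the bound collapses to
\[ \frac{B(\ell)}{\ell}=\frac{\ell}{4}+\frac{n-1}{2}+\frac{n-1}{\ell}=:h(\ell), \]
a convex function with continuous minimizer $\ell^\ast=\sqrt{4n-4}$. Since $4n-4=k^2+i$ with $0\le i\le 2k$ forces $\ell^\ast\in[k,k+1)$, the even integer nearest $\ell^\ast$ is $k$ when $k$ is even and $k+1$ when $k$ is odd; evaluating $h$ there (using $n-1=(k^2+i)/4$) reproduces the two branches of \eqref{leaflbeven} exactly, and convexity shows this even value beats every other even integer. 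The coincidence of the two branches at $i=k$ falls out of the identity $h(k+1)-h(k)=\frac{k-i}{4k(k+1)}$, mirroring the computation in Theorem~\ref{2.2}.

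The main obstacle is disposing of \emph{odd} $\ell$, since $\ell^\ast$ may sit closer to an odd integer than to the chosen even one. Here I would exploit that parity sharpens the off-path estimate: for odd $\ell$ the sum $d(z,u)+d(z,w)$ is odd, so $\max\ge(\ell+3)/2$, and a short computation gives
\[ \frac{B(\ell)}{\ell}=h(\ell)+\left(\frac{2n-1}{4\ell}-\frac12\right), \]
where the correction is strictly positive for every $\ell\le n-1$. Thus an odd $\ell$ adjacent to $\ell^\ast$ carries a positive surplus of order $\sqrt n$, while convexity bounds the possible deficit $h(\text{chosen even})-h(\text{odd})$ by a quantity of order $1/\sqrt n$; hence the even value wins for all $n\ge 5$, with the few smallest orders checked directly. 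This step is also where feasibility enters: the extremal tree needs $\ell+1\le n$ vertices on its path, which is exactly what pins the hypothesis to $n\ge 5$.

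Finally, for equality I would trace the inequalities back. Attaining $\Ecc(T)=B(\ell)$ forces $\ecc_T(z)=\max\{d(z,u),d(z,w)\}$ for every $z$ (so $u,w$ realize all eccentricities and $Q$ is a longest path), and forces each off-path vertex to have $t=1$ with $d(z,u)=d(z,w)$, i.e.\ to be a leaf attached to the exact midpoint of $Q$; together with $\ell$ even this is precisely the tree of Fig.~\ref{fig:max_tu1}. I expect the odd-$\ell$ elimination to be the only genuinely delicate step; everything else is the pointwise bound, Observation~\ref{path_ecc}, and a convexity-plus-parity optimization paralleling the earlier proofs.
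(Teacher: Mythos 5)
Your proposal follows essentially the same route as the paper: bound $\Ecc(T)$ from below by the eccentricity sum of a path $P(u,w)$ realizing $\ecc_T(u)$ plus $\lceil \ell/2\rceil+1$ for each off-path vertex, then minimize the resulting function of $\ell=\ecc_T(u)$ over integers near $\sqrt{4n-4}$ with attention to parity, and read off the equality conditions. If anything, your explicit computation of the odd-$\ell$ surplus $\frac{2n-2\ell-1}{4\ell}$ and the comparison of its order $\sqrt{n}$ against the $O(1/\sqrt{n})$ deficit is more careful than the paper, which dismisses the odd case with the single remark that the bound is tight only for even $x$ together with the checks $f(k)\leq f(k+2)$ and $f(k+1)\leq f(k-1)$.
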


\begin{proof}
Let $T$ be a tree and $u \in L(T)$. Let $x:=\ecc_T(u)$ and choose $w\in L(T)$ so that $d(u,w)=x$. Let $P:=P(u,w)$. The vertices on $P$ have $\ecc_T(u) \geq \ecc_P(u)$. The eccentricity of any vertex not on $P$ is at least $1+\frac{x}{2}$ with equality if $x$ is even and these vertices are adjacent to the center vertex of $P$. This gives the following lower bound:
\begin{align*}
\frac{\Ecc(T)}{\ecc_T(u)}
&\geq \frac{1}{x}\left( \left(\frac{3}{4}x^2 + x\right) + (n-x-1)\left(1+\frac{x}{2}\right)\right)=:f(x)
\end{align*}
%If $x$ is even, the path has an odd number of vertices and a single middle vertex. A lower bound on the total eccentricity in this case is
%\begin{align*}
%\frac{\Ecc(T)}{\ecc_T(u)}
%&\geq \frac{1}{x}\left( \left(\frac{3}{4}x^2 + x\right) + (n-x-1))\left(1+\frac{x}{2}\right)\right)=:f(x)
%\end{align*}
where equality holds when $P$ has even length and all vertices not on $P$ are adjacent to the center vertex of $P$ as in Fig.~\ref{fig:max_tu1}. 
%Again, simplifying the lower bound on $\Ecc(T)$ gives 
%$$ \frac{\Ecc(T)}{\ecc_T(u)} \geq \frac{x^2 + (2n-2)x + (4n-4)}{4x} =: f(x)$$
Examination of $f'(x)$ shows that the ratio is minimized when 
$$ x \in\left\{ \left\lfloor \sqrt{4n-4} \right\rfloor,\left\lceil \sqrt{4n-4} \right\rceil \right\}\subseteq\left\{k,k+1\right\}. $$ 
We already established that the lower bound is tight for even $x$. For the universal lower bound,  we let $x=k$ if $k$ is even and $k+1$ otherwise. Both will yield a realizable tree because  $x\leq n-1$ for $n\geq 5$. It is also important to note that if $4n-4$ is a perfect square, then  $k=\lfloor \sqrt{4n-4} \rfloor=\lceil \sqrt{4n-4}\rceil=2\sqrt{n-1}$, an even value. The lower bounds in \eqref{leaflbeven} are exactly $f(k)$ and $f(k+1)$.
For thoroughness, it can be verified that $f(k)\leq f(k+2)$ and $f(k+1)\leq f(k-1)$, for $k>1$ to show that  our choice of the even integer nearest $\sqrt{4n-4}$ was correct for this concave up function. 
\end{proof}
 
 %%%%%%%%%%%%%%%%%%%%%%%%%%%%%%%%%%%%%%%%%%%%%%%%%%%%%%%
 
 \subsection{On the extremal values of \texorpdfstring{$\frac{\ecc_T(u)}{\ecc_T(v)}$}{ecc(u)/ecc(v)} where \texorpdfstring{$u\in L(T)$}{u is a leaf} , \texorpdfstring{$v\in C(T)$}{v is in the center}}

\begin{theo} 
Let  $T$ be a tree on $n \geq 3$ vertices with $u\in L(T)$ and $v\in C(T)$. Then 
\begin{eqnarray*} 
\frac{\ecc_T(u)}{\ecc_T(v)} \leq 2,
\label{leaf_center_n4}
\end{eqnarray*}
where the upper bound  is tight for stars, even length paths, and more.
If, in addition, $n\geq 5$, then
\begin{eqnarray*}
 1+\frac{1}{\left\lfloor \frac{n-1}{2} \right\rfloor} \leq \frac{\ecc_T(u)}{\ecc_T(v)} . 
 \label{leaf_center_n5}
 \end{eqnarray*}
  Equality holds if and only if $T$ is one of the following trees: (1) For any $n\geq 5$, $T$ has a longest path $P$ on $n-1$ vertices with a single vertex $u$ adjacent to $v\in C(P)$. (2) For even $n\geq 5$, $T$ has a longest path $P$ on $n-2$ vertices with $u$ adjacent to $v\in C(P)$ and $w$ adjacent to any internal vertex of $P$. These structures are drawn in Fig.~\ref{fig:min_uv1}. 
\end{theo}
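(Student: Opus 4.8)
The plan is to handle the two inequalities separately and then pin down the equality cases of the lower bound via a diameter/radius enumeration.

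For the upper bound I would argue exactly as in the proof of Theorem~\ref{2.1}: since $v\in C(T)$ we have $\ecc_T(v)=\rad(T)$, while the leaf $u$ satisfies $\ecc_T(u)\le \diam(T)\le 2\rad(T)$, so $\ecc_T(u)/\ecc_T(v)\le 2$. The stated tightness for stars (where $\rad=1$, $\diam=2$) and for even length paths (where $\diam=2\rad$ and an endpoint realizes the diameter) is then an immediate check; since the statement only claims tightness for ``these and more,'' no full equality characterization is needed here.

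For the lower bound the key observation is that if $u$ is any leaf with unique neighbor $u'$, then every vertex $z\neq u$ satisfies $d(u,z)=1+d(u',z)$, whence $\ecc_T(u)=1+\ecc_T(u')\ge 1+\rad(T)$. Writing $r=\rad(T)=\ecc_T(v)$, this gives $\ecc_T(u)/\ecc_T(v)\ge (r+1)/r=1+\tfrac{1}{r}$ for \emph{every} leaf. I would then use the tree identity $\rad(T)=\lceil \diam(T)/2\rceil$ (which follows from Observation~\ref{cl:gen}) together with $\diam(T)\le n-1$ to get $r\le \lceil(n-1)/2\rceil$. For $n$ odd this already equals $\lfloor(n-1)/2\rfloor$, so $1+\tfrac{1}{r}\ge 1+1/\lfloor(n-1)/2\rfloor$. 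For $n$ even the only way to have $r=n/2>\lfloor(n-1)/2\rfloor$ is $\diam(T)=n-1$, forcing $T=P_n$; there every leaf has eccentricity $n-1$ and the ratio equals $2$, which exceeds the claimed bound. This proves the inequality for all leaves.

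For the equality characterization, tracing the two inequalities above shows equality forces both $\ecc_T(u)=r+1$ and $r=\lfloor(n-1)/2\rfloor$. The first condition forces $\ecc_T(u')=r$, i.e. the neighbor $u'$ of $u$ is a center vertex, so the extremal leaf must hang off the center. The second condition, via $r=\lceil\diam(T)/2\rceil$, forces $\diam(T)\in\{2r-1,2r\}$, so the number of off-path vertices $n-1-\diam(T)$ is $0$, $1$, or $2$. Fixing a longest path $P$, I would then enumerate: $0$ off-path vertices gives $T=P_n$ (excluded, ratio $2$); $1$ off-path vertex forces that vertex to attach to a center of $P$ (otherwise no leaf neighbors a center), yielding structure~(1), valid for all $n\ge5$; and $2$ off-path vertices occurs only for even $n$ with $\diam(T)=n-3$, where one vertex must attach to a center (this is $u$) and the other may attach to any internal vertex of $P$ (this is $w$), yielding structure~(2). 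In each construction I would verify that attaching pendants to internal vertices of $P$ leaves $\diam(T)$ at its prescribed value and leaves the relevant path-center at eccentricity $r$, so that it stays in $C(T)$ and $\ecc_T(u)=r+1$, making the ratio hit the bound. The main obstacle is precisely this bookkeeping: checking (a) that every off-path vertex attaches to an internal vertex of $P$ (any other attachment lengthens the diameter and changes $r$), and (b) that the pendants do not raise the eccentricity of the central vertex $v$. I would also watch the small cases $n=5$ and $n=6$, where the internal vertices of $P$ coincide with its centers and the endpoints of $P$ can themselves have eccentricity $r+1$; one checks these degeneracies produce exactly the trees in structures~(1) and~(2) rather than any new extremal trees.
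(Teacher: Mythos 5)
Your argument follows essentially the same route as the paper's: the upper bound from $\ecc_T(u)\le\diam(T)\le 2\rad(T)$, the lower bound by reducing to $1+\frac{1}{\rad(T)}$ and then bounding the radius via the length of a longest path, and the equality analysis by forcing the extremal leaf to hang off the center while the longest path has $n-1$ (or, for even $n$, possibly $n-2$) vertices. One small correction: for even $n$ the path $P_n$ has radius $n/2$ and leaf eccentricity $n-1$, so the ratio is $2-\frac{2}{n}$, not $2$; your conclusion that this exceeds $1+\frac{2}{n-2}$ for even $n\ge 6$ still holds, but the stated value is wrong. Also, in the two-off-path-vertex case you should explicitly exclude the second extra vertex attaching to the first (a pendant path of length $2$): for $n\ge 8$ this does not lengthen the diameter, so your parenthetical reason fails there, though your own necessary condition (some leaf must neighbor a center of $T$) does rule it out.
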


\begin{proof}
The upper bound of 2 follows from the facts that $\rad(T)= \ecc_T(v)$ and $\ecc_T(u)\leq \diam(T)\leq 2\rad(T)$.
This bound is tight for all trees whose maximum-length path has an odd number of vertices and $u$ a leave of one of these paths. 

Turning our attention to the lower bound, let $T$ be a tree with $n\geq 5$ vertices. 
We first show that it holds for paths. If $T$ is  a path, then 
\[\frac{\ecc_T(u)}{\ecc_T(v)} \geq \frac{n-1}{\frac{n}{2}}=  1+\frac{n-2}{n}  \geq 1+ \frac{1}{\left\lfloor \frac{n-1}{2} \right\rfloor}.\] 
% \[\frac{\ecc_T(u)}{\ecc_T(v)}%=\frac{n-1}{\frac{n-1}{2}}
% =2\geq  1+\frac{1}{\left\lfloor \frac{n-1}{2} \right\rfloor}.\] 
%On the other hand, if $T$ is a path with an even number of vertices, $n\geq 6$,
% \[\frac{\ecc_T(u)}{\ecc_T(v)} = \frac{n-1}{\frac{n}{2}} = \frac{2n-2}{n} = 1+\frac{n-2}{n} = 1+ \frac{1}{\frac{n}{n-2}} \geq 1+ \frac{1}{\frac{n-2}{2}}
%\geq 1+ \frac{1}{\left\lfloor \frac{n-1}{2} \right\rfloor}. \]  

For the remainder of the proof, we assume $T$ is not a path.  
Because $n\geq 5$, $\ecc_T(u) \geq \ecc_T(v) +1$ with equality exactly when $uv\in E(T)$. 
In addition, because $v\in C(T)$, Observation~\ref{cl:gen} guarantees a maximum-length path $P$ with $v$ in the middle. Because $T$ is not a path, $P$ has at most $n-1$ vertices and $\ecc_T(v) \leq \lceil\frac{n-2}{2}\rceil$. 
These two inequalities result in the desired bound.
\[\frac{\ecc_T(u)}{\ecc_T(v)} \geq 1 + \frac{1}{\ecc_T(v)} \geq 1 + \frac{1}{\lceil\frac{n-2}{2}\rceil}=1 + \frac{1}{\lfloor\frac{n-1}{2}\rfloor}.\]

Finally, let us analyze the trees $T$ for which equality holds. Because $P$ has at most $n-1$ vertices, we first examine the necessary and sufficient conditions to have $\ecc_T(v)=\lceil\frac{n-2}{2}\rceil$, based on the parity of $n$. If $n$ is odd, then $\ecc_T(v)=\frac{n-1}{2}$  if and only if $P$ has $n-1$ vertices.  For even $n$, $\ecc_T(v)=\frac{n-2}{2}$ if and only if $P$ has $n-1$ or $n-2$ vertices. 

Therefore, the bound in the theorem is tight exactly when $T$ is one of the following two trees which are drawn in Fig~\ref{fig:min_uv1}:  (1) $T$ is a tree with longest path $P$ on $n-1$ vertices and leaf $u$ adjacent to $v\in C(P)$. (2) For even $n$, $T$ is a tree with maximum-length path $P=z_1z_2\ldots z_{n-2}$ with $u$ adjacent to $v\in C(P)$ and $w$ adjacent to $z_i$ for some $i\in \{ 2,3, \ldots, n-3\}$.
\end{proof}

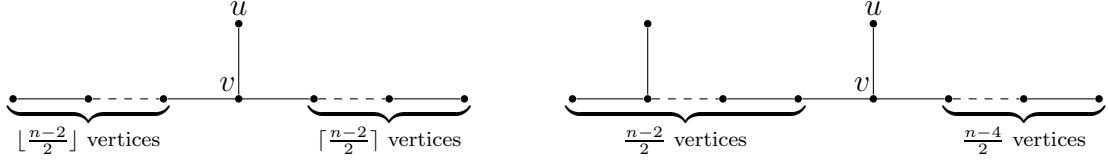
\begin{figure}[htbp]
\centering
\begin{tabular}{cc}
    \begin{tikzpicture}[scale=1]
        \node[fill=black,circle,inner sep=1pt] (t1) at (0,0) {};
        \node[fill=black,circle,inner sep=1pt] (t2) at (1,0) {};
        \node[fill=black,circle,inner sep=1pt] (t3) at (2,0) {};
        \node[fill=black,circle,inner sep=1pt] (t4) at (3,0) {};
        \node[fill=black,circle,inner sep=1pt] (t5) at (4,0) {};
        \node[fill=black,circle,inner sep=1pt] (t6) at (5,0) {};
        \node[fill=black,circle,inner sep=1pt] (t7) at (6,0) {};
        \node[fill=black,circle,inner sep=1pt] (t8) at (3,1) {};

        \draw (t1)--(t2);
        \draw (t3)--(t5);
        \draw (t6)--(t7);
        \draw (t4)--(t8);

        \draw [dashed] (t2)--(t3);
        \draw [dashed] (t5)--(t6);

        \node at (3,1.2) {$u$};
        \node at (2.85,.2) {$v$};
	 \node at (1,-.4) {$\substack{\underbrace{\hspace{5.2em}} \\ \lfloor\frac{n-2}{2} \rfloor\text{ vertices} }$};
        \node at (5,-.4) {$\substack{\underbrace{\hspace{5.2em}} \\ \lceil\frac{n-2}{2} \rceil\text{ vertices} }$};         
        \end{tikzpicture}
        & \hspace{.2in}
        \begin{tikzpicture}[scale=1]
        \node[fill=black,circle,inner sep=1pt] (t0) at (-1,0) {};
        \node[fill=black,circle,inner sep=1pt] (t1) at (0,0) {};
        \node[fill=black,circle,inner sep=1pt] (t2) at (1,0) {};
        \node[fill=black,circle,inner sep=1pt] (t3) at (2,0) {};
        \node[fill=black,circle,inner sep=1pt] (t4) at (3,0) {};
        \node[fill=black,circle,inner sep=1pt] (t5) at (4,0) {};
        \node[fill=black,circle,inner sep=1pt] (t6) at (5,0) {};
        \node[fill=black,circle,inner sep=1pt] (t7) at (6,0) {};
        \node[fill=black,circle,inner sep=1pt] (t8) at (3,1) {};
        \node[fill=black,circle,inner sep=1pt] (t9) at (0,1) {};
        
	 \draw (t0)--(t1);
	 \draw (t1)--(t9);
        \draw (t2)--(t3);
        \draw (t3)--(t5);
        \draw (t6)--(t7);
        \draw (t4)--(t8);

        \draw [dashed] (t1)--(t2);
        \draw [dashed] (t5)--(t6);

        \node at (3,1.2) {$u$};
        \node at (2.85,.2) {$v$};

         \node at (0.5,-.4) {$\substack{\underbrace{\hspace{7.7em}} \\  \frac{n-2}{2}\text{ vertices} }$}; 
        \node at (5,-.4) {$\substack{\underbrace{\hspace{5.2em}} \\ \frac{n-4}{2} \text{ vertices} }$}; 
        \end{tikzpicture}
        \end{tabular}
\caption{Trees which minimize $\frac{\ecc_T(u)}{\ecc_T(v)}$, the right one for even $n$ only.}\label{fig:min_uv1}
\end{figure} 

 %%%%%%%%%%%%%%%%%%%%%%%%%%%%%%%%%%%%%%%%%%%%%%%%%%%%%%%
 
 \subsection{On the extremal values of \texorpdfstring{$\frac{\ecc_T(u)}{\ecc_T(w)}$}{ecc(u)/ecc(w)} where \texorpdfstring{$u,w\in  L(T)$}{u,w are leaves}}

First note that since the maximum and minimum values of $\frac{\ecc_T(u)}{\ecc_T(w)}$ are reciprocals of each other, we only consider the maximum.

\begin{theo}
Let $T$ be a tree with $n\geq 4$ vertices. For any $u,w\in L(T)$, we have
\begin{eqnarray*}
\frac{\ecc_T(u)}{\ecc_T(w)}\leq  2- \frac{2}{\lfloor \frac{n}{2}\rfloor}. 
\end{eqnarray*}
For even $n$, equality holds if and only if  $T$ is a tree with longest path $P=uz_2z_3\ldots z_{n-1}$ and leaf $w$ adjacent to $z_{n/2}$. For odd $n$, equality holds if and only if $T$ is a tree with longest path $P=uz_2z_3\ldots z_{n-2}$, leaf $w$ adjacent to $z_{(n-1)/2}$ and leaf $\omega$ adjacent to $z_i$ for some $i\in\{2, \ldots, n-3\}$. These constructions are drawn in Fig.~\ref{fig:max_uw1}.
\end{theo}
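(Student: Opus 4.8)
The plan is to separate the ratio into ``the largest the numerator can be'' over ``the smallest the denominator can be,'' both expressed through $\diam(T)$ and $\rad(T)$. First I would record the elementary fact that for $n\ge 3$ a leaf $w$ with neighbor $v_w$ satisfies $\ecc_T(w)=\ecc_T(v_w)+1$, since every vertex other than $w$ is reached from $w$ through $v_w$; consequently $\ecc_T(w)\ge \rad(T)+1$, with equality exactly when $v_w\in C(T)$. Combining this with $\ecc_T(u)\le \diam(T)$ and writing $D:=\diam(T)$, $\rad(T)=\lceil D/2\rceil$, gives
\[
\frac{\ecc_T(u)}{\ecc_T(w)}\le \frac{\diam(T)}{\rad(T)+1}=\frac{D}{\lceil D/2\rceil+1}.
\]

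Next I would bound $D$ in terms of $n$. If $T$ is a path then both leaves realize the diameter and the ratio equals $1$, which is at most the claimed bound whenever $n\ge 4$; so I may assume $T$ is not a path. Then the longest path omits at least one vertex, giving $D\le n-2$. Setting $g(D):=\frac{D}{\lceil D/2\rceil+1}$, the core of the argument is to maximize $g$ over $2\le D\le n-2$. The key point is that $g$ favors even diameters: for $D=2m$ one gets $g(2m)=2-\frac{2}{m+1}$, while the neighboring odd values $g(2m-1)=2-\frac{3}{m+1}$ and $g(2m+1)=2-\frac{3}{m+2}$ are (weakly, and strictly in the relevant range) smaller. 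Hence the maximum over $D\le n-2$ is attained at the largest even value of $D$ not exceeding $n-2$: namely $D=n-2$ for even $n$ and $D=n-3$ for odd $n$. In both cases a direct substitution gives $2-\frac{2}{\lfloor n/2\rfloor}$, establishing the upper bound.

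Finally I would read off the equality cases by tracking where each inequality is tight: $\ecc_T(u)=D$ forces $u$ to be an endpoint of a longest path $P$; $\ecc_T(w)=\rad(T)+1$ forces the neighbor of $w$ to be a center vertex of $P$; and the optimization forces $D$ to equal the extremal even value. For even $n$ this means $P$ has $n-1$ vertices and the single remaining vertex $w$ is attached to the unique center $z_{n/2}$, which is the left structure of Fig.~\ref{fig:max_uw1}. For odd $n$, $D=n-3$ leaves two spare vertices: one is $w$ at the center $z_{(n-1)/2}$, and the other, $\omega$, must be attached so as not to lengthen $P$ beyond $n-3$; checking that the longest path through $\omega$ stays of length $\le n-3$ forces $\omega$ to be a leaf adjacent to $z_i$ with $2\le i\le n-3$. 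I would then verify directly that each listed tree attains the bound, and flag that in the degenerate smallest case ($n=4$, where the bound is $1$) additional trees such as the path also meet it, so the characterization is exact for $n$ above this range. The main obstacle is precisely the parity analysis of $g$: one must confirm that even diameters dominate the adjacent odd ones and then correctly select the extremal $D$ according to the parity of $n$, since a naive ``take $D$ as large as possible'' would overshoot the true bound for odd $n$.
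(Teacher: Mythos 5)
Your argument is correct and reaches the same bound, but by a noticeably cleaner route than the paper's. The paper fixes a path $P(u,y)$ realizing $\ecc_T(u)$, projects $w$ onto it to obtain $\ecc_T(w)\ge 1+\lceil \ecc_T(u)/2\rceil$, and then---because the resulting bound dips at odd values of $\ecc_T(u)$---performs a leaf-relocation surgery (detaching every vertex at distance $\ecc_T(u)$ from $u$ and reattaching it at $u$'s neighbor) to reduce to the case of even $\ecc_T(u)$, before finally bounding $\ecc_T(u)$ by $n-2$ or $n-3$ according to the parity of $n$. You instead use $\ecc_T(w)=\ecc_T(v_w)+1\ge\rad(T)+1$ together with $\rad(T)=\lceil\diam(T)/2\rceil$, reducing everything to maximizing $g(D)=D/(\lceil D/2\rceil+1)$ over $D\le n-2$, and you settle the parity issue by the elementary comparisons $g(2m)\ge g(2m\pm1)$ rather than by a tree transformation. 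The quantitative content is the same, but you dispense with the surgery step, which is the most delicate part of the paper's proof, and your equality analysis (tightness of each link in the chain) is if anything more transparent. Two small points to tighten: $g(2m)\ge g(2m+1)$ holds with equality at $m=1$, so the degenerate range where extra trees attain the bound is $n\in\{4,5\}$, not just $n=4$ (for $n=5$ the bound is $1$ and, e.g., the path $P_5$ and the diameter-$3$ ``chair'' also achieve it); this is actually a defect of the theorem's stated ``if and only if,'' which the paper's own proof papers over by asserting that ratio $1$ is ``strictly smaller'' than the bound. You caught half of this; make the caveat cover $n=5$ as well.
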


\begin{proof}
Let $T$ be a tree and let $u,w\in L(T)$. For the upper bound, it is reasonable to assume $\ecc_T(u)\geq \ecc_T(w)$. 
If $d(u,w)=\ecc_T(u)$, then $\frac{\ecc_T(u)}{\ecc_T(w)} =1$ which is strictly smaller than the bound in the theorem.

For the remainder of the proof, we focus on the case where $d(u,w)<\ecc_T(u)$. Choose $y\in L(T)$ so that $\ecc_T(u)=d(u,y)$ and let $P:=P(u,y)$. 
There is a unique path from $w$ to the nearest vertex, say $z$, on $P$.
Thus \[\ecc_T(w) \geq d(w,z) + \max\{d(z,u),d(z,y)\}\geq 1 + \left\lceil\frac{1}{2} \ecc_T(u)\right\rceil\] where equality holds if $d(w,z)=1$ and $|d(z,u)-d(z,y)| \leq 1$. We now consider two cases based on the parity of $\ecc_T(u)$.

First suppose $x:=\ecc_T(u)$ is odd. Let $S$ be the collection $\{y: d(u,y)=x\}$. Notice that $w$ is not in $S$ because $d(u,w)<x$. Now create a new tree $F$ from $T$ by detaching each $y\in S$ and reattaching each as a pendant vertex adjacent to the unique neighbor of $u$ in $T$. 
As a result, $\ecc_F(u) = x-1$, an even integer. By the above argument, $\ecc_T(w)\geq 1+ \lceil\frac{x}{2} \rceil = 1+\frac{1}{2}(x+1)$ while $\ecc_F(w) \geq 1+  \lceil\frac{1}{2} \ecc_F(u)\rceil  = 1+ \frac{1}{2}(x-1)$. As a result, we obtain tight upper bounds $\frac{\ecc_T(u)}{\ecc_T(w)}\leq \frac{x}{\frac{1}{2}(x+3)}$ and $\frac{\ecc_F(u)}{\ecc_F(w)}\leq \frac{x-1}{\frac{1}{2}(x+1)}$. The second gives the larger upper bound. Since we seek a tight universal upper bound for the ratio, it suffices to consider only trees with $u$ having even eccentricity. 

Assume $\ecc_T(u)$ is even. If $n$ is even, then $\ecc_T(u) \leq n-2$ because $w$ is not on $P$. However, we can tighten this to $\ecc_T(u) \leq n-3$ when $n$ is odd because of our assumption about the parity of $\ecc_T(u)$. In either case, $1+ \frac{1}{2}\ecc_T(u) \leq \lfloor \frac{n}{2} \rfloor$. This give the desired bound:
\[\frac{\ecc_T(u)}{\ecc_T(w)} \leq \frac{\ecc_T(u)}{1 + \frac{1}{2} \ecc_T(u)}
=2-\frac{2}{1+\frac{1}{2} \ecc_T(u)}
\leq 2 - \frac{2}{\lfloor \frac{n}{2} \rfloor} 
.\]

Finally, we characterize the trees $T$, based on the parity of $n$, for which equality holds. For even $n$, equality holds if and only if $T$ is a tree with longest path $P$ on $n-1$ vertices with leaf $w$ adjacent to the center of $P$. For odd $n$, equality holds if and only if $T$ is a tree with longest path $P$ on $n-2$ vertices with leaf $w$ adjacent to the center of $P$ and leaf $\omega$ adjacent to any internal vertex of $P$. This is exemplified by Fig.~\ref{fig:max_uw1},  with the additional leaf that occurs only for odd $n$ in gray. 
\end{proof}

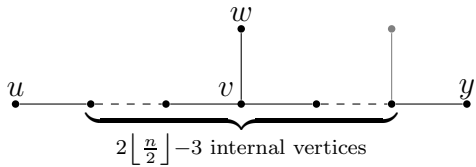
\begin{figure}[htbp]
\centering
    \begin{tikzpicture}[scale=1]
        \node[fill=black,circle,inner sep=1pt] (t1) at (0,0) {};
        \node[fill=black,circle,inner sep=1pt] (t2) at (1,0) {};
        \node[fill=black,circle,inner sep=1pt] (t3) at (2,0) {};
        \node[fill=black,circle,inner sep=1pt] (t4) at (3,0) {};
        \node[fill=black,circle,inner sep=1pt] (t5) at (4,0) {};
        \node[fill=black,circle,inner sep=1pt] (t6) at (5,0) {};
        \node[fill=black,circle,inner sep=1pt] (t7) at (6,0) {};
        \node[fill=black,circle,inner sep=1pt] (t8) at (3,1) {};
        \node[fill=gray,circle,inner sep=1pt] (t9) at (5,1) {};

        \draw (t1)--(t2);
        \draw (t3)--(t5);
        \draw (t6)--(t7);
        \draw (t4)--(t8);
        \draw [color=gray] (t6)--(t9);
        \draw [dashed] (t2)--(t3);
        \draw [dashed] (t5)--(t6);

        \node at (3,1.2) {$w$};
        \node at (2.8,.2) {$v$};
        \node at (0,.2) {$u$};
        \node at (6,.2) {$y$};

        \node at (3,-.45) {$\substack{\underbrace{\hspace{10 em}}\\ {2\left\lfloor \frac{n}{2} \right\rfloor-3 \text{ internal vertices}}}$};

        \end{tikzpicture}
\caption{A  tree maximizing  $\frac{\ecc_T(u)}{\ecc_T(w)}$.}\label{fig:max_uw1}
\end{figure}

Table~\ref{table:1} summarizes the results in section 2. Vertex labels appear as in the theorems. Specifically $v$ is always in $C(T)$ while each $u$ and $w$ are leaves of $T$. 

\begin{table}[ht]
\begin{center}
\begin{tabular} {!{\vrule width 1.3pt} m{2cm} |  m{3.6cm} | >{\centering\arraybackslash}m{4cm} !{\vrule width 1.3pt}}
\cline{2-3}
 \multicolumn{1}{c|}{}& \hspace{1.3cm}Bound & Extremal Tree\\ \noalign{\hrule height 1.3pt}
\Gape[10pt][0pt]{ $\displaystyle \frac{\Ecc(T)}{\ecc_T(v)}$ }& 
$\leq 2n-1$ & 
\begin{tikzpicture}[scale=0.5]
\foreach \x in {0,1,2,3,4,5,6} \draw [fill=black] (90-\x*45:1) circle (0.06);
\foreach \x in {0,1,2,3,4,5,6} \draw (0:0)--(90-\x*45:1);
\draw [fill=black] (0:0) circle (0.06);
\foreach \x in {0,1,2} \draw [fill=black] (122+13*\x:1) circle (0.015);
\end{tikzpicture}
\\
\hline
\Gape[10pt][0pt]{$\displaystyle\frac{\Ecc(T)}{\ecc_T(v)}$ }& 
$\geq n+2\sqrt{n}-O(1)$
 & 
    \begin{tikzpicture}[scale=.4, every node/.style={scale=.8}]
        \node[fill=black,circle,inner sep=1pt] (t1) at (0,0) {};
        \node[fill=black,circle,inner sep=1pt] (t2) at (1,0) {};
        \node[fill=black,circle,inner sep=1pt] (t3) at (2,0) {};
        \node[fill=black,circle,inner sep=1pt] (t4) at (3,0) {};
        \node[fill=black,circle,inner sep=1pt] (t5) at (4,0) {};
        \node[fill=black,circle,inner sep=1pt] (t6) at (5,0) {};
        \node[fill=black,circle,inner sep=1pt] (t7) at (6,0) {};
        \node[fill=black,circle,inner sep=1pt] (t8) at (7,0) {};
        
        \node[fill=black,circle,inner sep=1pt] (t15) at (2.65,-.5) {};
        \node[fill=black,circle,inner sep=1pt] (t17) at (3.35,-.5) {};
        \node[fill=black,circle,inner sep=1pt] (t18) at (3.65,-.5) {};
        \node[fill=black,circle,inner sep=1pt] (t20) at (4.35,-.5) {}; 
        
        \draw (t1)--(t2);
        \draw (t3)--(t6);
        \draw (t7)--(t8);
        \draw [dashed] (t2)--(t3);
        \draw [dashed] (t6)--(t7);
        
        \draw (t15)--(t4);
        \draw (t17)--(t4);
	 \draw (t18)--(t5);
        \draw (t20)--(t5);           
        
        \node at (3,-.4) {$\ldots$};
        \node at (4,-.4) {$\ldots$};
        \node at (4,.5) {$v$};
        \node at (3.5,-1.2) {$\underbrace{\hspace{2.7 em}}_{\approx n-2\sqrt{n}}$};
        \end{tikzpicture}
\\
\noalign{\hrule height 1.3pt}
\Gape[10pt][0pt]{$\displaystyle\frac{\Ecc(T)}{\ecc_T(u)}$ }
& 
$\leq 2n-2\sqrt{2n}+O(1)$
& 
   \begin{tikzpicture}[scale=.4, every node/.style={scale=.8}]
        \node[fill=black,circle,inner sep=1pt] (t1) at (0,0) {};
        \node[fill=black,circle,inner sep=1pt] (t2) at (1,0) {};
        \node[fill=black,circle,inner sep=1pt] (t3) at (2,0) {};
        \node[fill=black,circle,inner sep=1pt] (t4) at (3,0) {};
        \node[fill=black,circle,inner sep=1pt] (t5) at (4,0) {};
        \node[fill=black,circle,inner sep=1pt] (t6) at (5,0) {};
        \node[fill=black,circle,inner sep=1pt] (t7) at (6,0) {};
        \node[fill=black,circle,inner sep=1pt] (t8) at (3,1) {};
        
        \node[fill=black,circle,inner sep=1pt] (t15) at (.5,.5) {};
        \node[fill=black,circle,inner sep=1pt] (t17) at (1.5,.5) {};
        
        \node[fill=black,circle,inner sep=1pt] (t25) at (5.5,.5) {};
        \node[fill=black,circle,inner sep=1pt] (t27) at (4.5,.5) {};
        
        \draw (t1)--(t2);
        \draw (t3)--(t5);
        \draw (t6)--(t7);
        \draw (t4)--(t8);
        \draw [dashed] (t2)--(t3);
        \draw [dashed] (t5)--(t6);
        
        \draw (t15)--(t2);
        \draw (t17)--(t2);
        \draw (t25)--(t6);
        \draw (t27)--(t6);
                
        \node at (1,.4) {$\ldots$};
        \node at (5,.4) {$\ldots$};       
        \node at (3,1.4) {$u$};
        \node at (2.7,.3) {$v$};
        \node at (3,-.8) {$\substack{\underbrace{\hspace{5.9 em}}\\ {\approx 2\sqrt{2n}-3 \text{ internal}}}$}; 
        \end{tikzpicture}
\\ \hline
\Gape[10pt][0pt]{$\displaystyle \frac{\Ecc(T)}{\ecc_T(u)}$}
& 
$\geq \frac{1}{2}n + \sqrt{n} - O(1)$
& 
    \begin{tikzpicture}[scale=.4, every node/.style={scale=.8}]
        \node[fill=black,circle,inner sep=1pt] (t1) at (0,0) {};
        \node[fill=black,circle,inner sep=1pt] (t2) at (1,0) {};
        \node[fill=black,circle,inner sep=1pt] (t3) at (2,0) {};
        \node[fill=black,circle,inner sep=1pt] (t4) at (3,0) {};
        \node[fill=black,circle,inner sep=1pt] (t5) at (4,0) {};
        \node[fill=black,circle,inner sep=1pt] (t6) at (5,0) {};
        \node[fill=black,circle,inner sep=1pt] (t7) at (6,0) {};
        
        \node[fill=black,circle,inner sep=1pt] (t15) at (2.5,-.5) {};
        \node[fill=black,circle,inner sep=1pt] (t17) at (3.5,-.5) {};
        
        \draw (t1)--(t2);
        \draw (t3)--(t5);
        \draw (t6)--(t7);
        \draw [dashed] (t2)--(t3);
        \draw [dashed] (t5)--(t6);
        
        \draw (t15)--(t4);
        \draw (t17)--(t4);

        \node at (3,-.4) {$\ldots$};
        \node at (6,.4) {$w$};
        \node at (0,.4) {$u$};
        \node at (3,.4) {$v$};
      
        \node at (3,-1.2) {$\substack{\underbrace{\hspace{2em}} \\ \approx n-2\sqrt{n}-1}$};
        \end{tikzpicture}
\\ \noalign{\hrule height 1.3pt}
\Gape[10pt][0pt]{$\displaystyle\frac{\ecc_T(u)}{ \ecc_T(v)}$}
&
$\leq 2$
& 
Stars, even length paths with pendant edges, etc.
\\ \hline
\Gape[10pt][0pt]{$\displaystyle \frac{\ecc_T(u)}{ \ecc_T(v)}$}
&     
$\geq 1+\frac{2}{ n}+O(\frac{1}{n^2})$
& 
    \begin{tikzpicture}[scale=.4, every node/.style={scale=.8}]]
        \node[fill=black,circle,inner sep=1pt] (t1) at (0,0) {};
        \node[fill=black,circle,inner sep=1pt] (t2) at (1,0) {};
        \node[fill=black,circle,inner sep=1pt] (t3) at (2,0) {};
        \node[fill=black,circle,inner sep=1pt] (t4) at (3,0) {};
        \node[fill=black,circle,inner sep=1pt] (t5) at (4,0) {};
        \node[fill=black,circle,inner sep=1pt] (t6) at (5,0) {};
        \node[fill=black,circle,inner sep=1pt] (t7) at (6,0) {};
        \node[fill=black,circle,inner sep=1pt] (t8) at (3,1) {};
        \node[fill=gray,circle,inner sep=1pt] (t9) at (1,1) {};
        
        \draw (t1)--(t2);
        \draw (t3)--(t5);
        \draw (t6)--(t7);
        \draw (t4)--(t8);

        \draw [dashed] (t2)--(t3);
        \draw [dashed] (t5)--(t6);
     	 \draw [color=gray] (t9)--(t2);
	
        \node at (3,1.4) {$u$};
        \node at (2.7,.35) {$v$};

        \end{tikzpicture}
        
\\ \noalign{\hrule height 1.3pt}
\Gape[10pt][0pt]{$\displaystyle \frac{\ecc_T(u)}{ \ecc_T(w)}$}
&
$\leq 2-\frac{4}{ n} + O(\frac{1}{n^2})$
&
    \begin{tikzpicture}[scale=.4, every node/.style={scale=.8}]
        \node[fill=black,circle,inner sep=1pt] (t1) at (0,0) {};
        \node[fill=black,circle,inner sep=1pt] (t2) at (1,0) {};
        \node[fill=black,circle,inner sep=1pt] (t3) at (2,0) {};
        \node[fill=black,circle,inner sep=1pt] (t4) at (3,0) {};
        \node[fill=black,circle,inner sep=1pt] (t5) at (4,0) {};
        \node[fill=black,circle,inner sep=1pt] (t6) at (5,0) {};
        \node[fill=black,circle,inner sep=1pt] (t7) at (6,0) {};
        \node[fill=black,circle,inner sep=1pt] (t8) at (3,1) {};
        \node[fill=gray,circle,inner sep=1pt] (t9) at (5,1) {};
        
        \draw (t1)--(t2);
        \draw (t3)--(t5);
        \draw (t6)--(t7);
        \draw (t4)--(t8);
        \draw [color=gray, line width = 1pt] (t6)--(t9);
        \draw [dashed] (t2)--(t3);
        \draw [dashed] (t5)--(t6);
        
        \node at (3,1.4) {$w$};
        \node at (2.7,.4) {$v$};
        \node at (0,.4) {$u$};

        \end{tikzpicture}
        \\ \noalign{\hrule height 1.3pt}
        \end{tabular}
\end{center}
\caption{A summary of results for an arbitrary tree $T$ on $n$ vertices with $v\in C(T)$ and $u,w\in L(T)$.}
%The quantities $a(n),b(n),c(n)$ are bounded as follows: $1\leq a(n)\leq 5$, $-1\leq b(n) \leq 5$, $0\leq c(n)\leq \frac{3}{2}$.}
%When $0\leq i\leq k$, $0\leq \frac{i}{k} \leq 1$. When $k+1\leq i\leq 2k$, $1\leq \frac{i+1}{k+1} \leq 2$. Because $k=\lfloor \sqrt{n} \rfloor$, we have $0\leq \sqrt{n}-k\leq 1$. Therefore when $a(n) = 3-\frac{i}{k} + 2(\sqrt{n}-k)$, it is between 2 and 5. When $a(n) = 3-\frac{i+1}{k+1} + 2(\sqrt{n}-k)$, it is between 1 and 4.

%For $b(n)$, when $0\leq i\leq k$, $0\leq \frac{i}{k} \leq 1$. When $k+1\leq i\leq 2k$, $1\leq \frac{i+1}{k+1} \leq 2$. Because $k\leq \sqrt{2n-1} \leq k+1$, we have $k\leq \sqrt{2n} \leq k+2$. Therefore $0\leq \sqrt{2n}-k\leq 2$. When $b(n) = 1+2(\sqrt{2n}-k)-\frac{i}{k}$, it is between 0 and 5. Similarly, when $b(n) = 1+2(\sqrt{2n}-k)-\frac{i+1}{k+1}$, it is between -1 and 4.

%For $c(n)$, in both cases $0\leq i\leq 2k$, so $0\leq \frac{i}{4k}\leq \frac{1}{2}$ and $0\leq \frac{i+1}{4(k+1}}\leq \frac{1}{2}$. Recall $k\leq \sqrt{4n-4}\leq k+1$, so $k\leq \sqrt{4n}\leq k+2$ because $n\geq 3$. Therefore, when $k$ is even, $c(n)=\sqrt{n}-\frac{k}{2} - \frac{i}{4k} +\frac{1}{2} = \frac{1}{2} (\sqrt{4n}- k ) - \frac{i}{4k} +\frac{1}{2}$ which is between 0 and $\frac{3}{2}$. When $k$ is odd, only the $\frac{i}{4k}$ changes to $\frac{i+1}{4(k+1}}$ which has the same bounds. So the $0\leq c(n) \leq \frac{3}{2}$. 
\label{table:1}
\end{table}

 %%%%%%%%%%%%%%%%%%%%%%%%%%%%%%%%%%%%%%%%%%%%%%%%%%%%%%%
 \section{Extremal structures}
In this section, we fix a class of trees and find the ones in this class that maximize $\Ecc(T)$ and the ones that minimize $\Ecc(T)$.  First, we consider the trees on $n$ vertices. Then, we fix a degree sequence and search in the class of trees that realize this degree sequence. 

 %%%%%%%%%%%%%%%%%%%%%%%%%%%%%%%%%%%%%%%%%%%%%%%%%%%%%%%
 \subsection{General trees}

For many indices, such as the sum of distances and the number of subtrees, the star and the path are extremal. Dankelmann, Goddard, and Swart \cite{danke} showed that the path maximizes $\Ecc(T)$ among trees with given order.  We show that the star minimizes $\Ecc(T)$ among trees with given order.

\begin{prop}
For any tree $T$ with $n>2$ vertices, 
$$ \Ecc(T) \geq 1 + 2(n-1) = 2n-1 $$
with equality if and only if $T$ is a star.
\end{prop}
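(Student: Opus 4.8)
The plan is to dispose of the star by direct computation and then show every other tree on $n>2$ vertices strictly exceeds the bound. For the star $K_{1,n-1}$ with center $c$, we have $\ecc_T(c)=1$ while each of the $n-1$ leaves has eccentricity $2$, so $\Ecc(T)=1+2(n-1)=2n-1$. This confirms the bound is attained and reduces the problem to proving a strict inequality for non-stars.

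The key observation for the lower bound is a characterization of when a vertex can have eccentricity $1$: $\ecc_T(z)=1$ holds if and only if $z$ is adjacent to all $n-1$ other vertices. But a vertex adjacent to every other vertex already contributes $n-1$ edges, and a tree on $n$ vertices has exactly $n-1$ edges; hence these are \emph{all} the edges and $T=K_{1,n-1}$. Equivalently, one may phrase this as $\rad(T)\geq 2$ for every non-star tree, a fact compatible with the $\rad$/$\diam$ machinery already invoked in the proof of Theorem~\ref{2.1}. Consequently, if $T$ is not a star, then no vertex has eccentricity $1$, so $\ecc_T(z)\geq 2$ for every $z\in V(T)$, which gives $\Ecc(T)\geq 2n>2n-1$. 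Combining the two cases, the star is the unique minimizer.

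I expect no serious obstacle here; the entire content is the eccentricity-$1$ characterization and the accompanying edge count, both of which are routine. The only point demanding care is the hypothesis $n>2$: for $n=2$ the unique tree $K_2$ has both vertices of eccentricity $1$, so $\Ecc(K_2)=2\neq 3=2n-1$, and the star's closed form $1+2(n-1)$ fails because its ``leaf'' also has eccentricity $1$ rather than $2$. For $n>2$ this degeneracy disappears, so the argument and the equality characterization go through cleanly.
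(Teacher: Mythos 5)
Your proof is correct and rests on the same key observation as the paper's: a vertex of eccentricity $1$ in a tree on $n>2$ vertices forces the tree to be a star, so at most one vertex can have eccentricity $1$. The only difference is organizational --- you split into star/non-star cases and get the strict bound $\Ecc(T)\geq 2n$ for non-stars, whereas the paper proves the uniform bound $\Ecc(T)\geq 1+2(n-1)$ and then reads off the equality case --- which is an immaterial variation.
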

\begin{proof}
Any tree with at least three vertices has at most one vertex which is adjacent to every other vertex (hence with eccentricity 1).  Thus we have
$$ \Ecc(T) \geq 1 + 2(n-1) = 2n-1. $$
Equality holds if and only if the single center vertex has eccentricity 1 and all other vertices have eccentricity 2. This characterizes the star.
\end{proof}
\subsection{Trees with given degree sequences}

Given a degree sequence, let $\mathcal{T}$ be the class of trees that realize this degree sequence. We determine which trees in $\mathcal{T}$ have total eccentricity equal to $\min_{T\in \mathcal{T}} \Ecc(T)$ or $\max_{T\in \mathcal{T}} \Ecc(T)$. We note that a  sequence $(d_1,d_2,\ldots, d_n)$ is the degree sequence for a tree if and only if $\sum_{i=1}^n d_i = 2(n-1)$ and each $d_i$ is a positive integer.
 %%%%%%%%%%%%%%%%%%%%%%%%%%%%%%%%%%%%%%%%%%%%%%%%%%%%%%%
 \subsubsection{General Caterpillars}

Among all trees with a given degree sequence, the sum of distances is maximized by a caterpillar \cite{zhang2008} and the number of subtrees is minimized by a caterpillar \cite{drew, zhang_new}. However, completely characterizing the extremal caterpillar turns out to be a very difficult question in both cases. For the sum of distances, it is a quadratic assignment problem that is NP-hard in the ordinary sense and solvable in pseudo-polynomial time \cite{cela}.

\begin{defi}\cite{wang_ec}
For $n\geq 3$, let $\deg=(d_1, d_2, \ldots, d_n)$ be the non-decreasing degree sequence of a tree with $d_k>1$ and  $d_{k+1}=1$ for some $k\in [n-2]$. The {\it greedy caterpillar}, $T$, is constructed as follows: 
\begin{itemize}
\item Start with a path $P=z_1z_2\ldots z_k$. 
\item Let $\phi: \{z_i\}_{i=1}^k \rightarrow \{d_i\}_{i=1}^k$ be a one-to-one function such that, for each pair $i,j\in [k]$, if $\ecc_P(z_i)> \ecc_P(z_{j})$ then $\phi(z_i)\geq \phi(z_{j})$ . 
\item For each $i\in \{2,3,\ldots, k-1\}$, attach $\phi(z_i)-2$ pendant vertices to $z_i$. For $i\in \{1,k\}$, attach $\phi(z_i)-1$ pendant vertices to $z_i$. 
 \end{itemize}
 \label{gr_cat}
\end{defi}
Fig.~\ref{fig:gre_cat} gives two examples of greedy caterpillars and highlights the fact that greedy caterpillars are not unique.

\begin{figure}[htbp]
\centering
    \begin{tikzpicture}[scale=1]
        \node[fill=black,circle,inner sep=1pt] (t1) at (0,0) {};
        \node[fill=black,circle,inner sep=1pt] (t2) at (.7,0) {};
        \node[fill=black,circle,inner sep=1pt] (t4) at (1.5,0) {};       
        \node[fill=black,circle,inner sep=1pt] (t3) at (2.3,0) {};
        \node[fill=black,circle,inner sep=1pt] (t8) at (3,0) {};
        \node[fill=black,circle,inner sep=1pt] (t11) at (0,1) {};
        \node[fill=black,circle,inner sep=1pt] (t12) at (0,-1) {};
        \node[fill=black,circle,inner sep=1pt] (t81) at (3,-1) {};
        \node[fill=black,circle,inner sep=1pt] (t82) at (3,1) {};
        \node[fill=black,circle,inner sep=1pt] (t21) at (.4,-1) {};
        \node[fill=black,circle,inner sep=1pt] (t22) at (.7,-1) {};
        \node[fill=black,circle,inner sep=1pt] (t23) at (1,-1) {};
        \node[fill=black,circle,inner sep=1pt] (t31) at (2,-1) {};
        \node[fill=black,circle,inner sep=1pt] (t32) at (2.6,-1) {};
        \node[fill=black,circle,inner sep=1pt] (t41) at (1.2,-1) {};
        \node[fill=black,circle,inner sep=1pt] (t42) at (1.8,-1) {};

        \node[fill=black,circle,inner sep=1pt] (t5) at (-.5,-.5) {};
        \node[fill=black,circle,inner sep=1pt] (t6) at (-.5,0) {};
        \node[fill=black,circle,inner sep=1pt] (t7) at (-.5,.5) {};
        
        \node[fill=black,circle,inner sep=1pt] (t15) at (3.5,-.75) {};
        \node[fill=black,circle,inner sep=1pt] (t16) at (3.5,-.25) {};
        \node[fill=black,circle,inner sep=1pt] (t18) at (3.5,.25) {};
        \node[fill=black,circle,inner sep=1pt] (t17) at (3.5,.75) {};
        
        \draw (t1)--(t2)--(t4)--(t3)--(t8);
        \draw (t11)--(t1)--(t12);
        \draw (t21)--(t2)--(t22)--(t2)--(t23);
        \draw (t82)--(t8)--(t81);
        \draw (t5)--(t1);
        \draw (t6)--(t1);
        \draw (t7)--(t1);
        \draw (t31)--(t3)--(t32);
        \draw (t41)--(t4)--(t42);
        
        \draw (t15)--(t8);
        \draw (t16)--(t8);
        \draw (t17)--(t8);
        \draw (t18)--(t8);

       \begin{scope}[shift={+(7,0)}]
       \node[fill=black,circle,inner sep=1pt] (t1) at (0,0) {};
        \node[fill=black,circle,inner sep=1pt] (t2) at (.7,0) {};
        \node[fill=black,circle,inner sep=1pt] (t4) at (1.5,0) {};       
        \node[fill=black,circle,inner sep=1pt] (t3) at (2.3,0) {};
        \node[fill=black,circle,inner sep=1pt] (t8) at (3,0) {};
        \node[fill=black,circle,inner sep=1pt] (t11) at (0,1) {};
        \node[fill=black,circle,inner sep=1pt] (t12) at (0,-1) {};
        \node[fill=black,circle,inner sep=1pt] (t81) at (3,-1) {};
        \node[fill=black,circle,inner sep=1pt] (t82) at (3,1) {};
        \node[fill=black,circle,inner sep=1pt] (t21) at (.4,-1) {};
        \node[fill=black,circle,inner sep=1pt] (t22) at (.7,-1) {};
        \node[fill=black,circle,inner sep=1pt] (t23) at (1,-1) {};
        \node[fill=black,circle,inner sep=1pt] (t31) at (2,-1) {};
        \node[fill=black,circle,inner sep=1pt] (t32) at (2.6,-1) {};
        \node[fill=black,circle,inner sep=1pt] (t41) at (1.2,-1) {};
        \node[fill=black,circle,inner sep=1pt] (t42) at (1.8,-1) {};

        \node[fill=black,circle,inner sep=1pt] (t5) at (-.5,-.75) {};
        \node[fill=black,circle,inner sep=1pt] (t6) at (-.5,-.25) {};
        \node[fill=black,circle,inner sep=1pt] (t9) at (-.5,.25) {};
        \node[fill=black,circle,inner sep=1pt] (t7) at (-.5,.75) {};
        
        \node[fill=black,circle,inner sep=1pt] (t15) at (3.5,-.5) {};
        \node[fill=black,circle,inner sep=1pt] (t16) at (3.5,0) {};
        \node[fill=black,circle,inner sep=1pt] (t17) at (3.5,.5) {};
        
        \draw (t1)--(t2)--(t4)--(t3)--(t8);
        \draw (t11)--(t1)--(t12);
        \draw (t21)--(t2)--(t22)--(t2)--(t23);
        \draw (t82)--(t8)--(t81);
        \draw (t5)--(t1);
        \draw (t6)--(t1);
        \draw (t7)--(t1);
        \draw (t9)--(t1);
        \draw (t31)--(t3)--(t32);
        \draw (t41)--(t4)--(t42);
        
        \draw (t15)--(t8);
        \draw (t16)--(t8);
        \draw (t17)--(t8);

       \end{scope}
        \end{tikzpicture}
\caption{Non-isomorphic greedy caterpillars for degree sequence $ (7,6,5,4,4,1,\ldots,1) . $}\label{fig:gre_cat}
\end{figure}
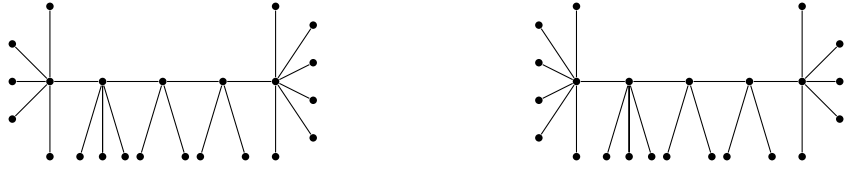

\begin{prop}
Among trees with a given tree degree sequence, the greedy caterpillar has the maximum total eccentricity.  
\end{prop}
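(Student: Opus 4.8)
The plan is to argue in two stages: first reduce an arbitrary maximizer to a caterpillar, then single out the greedy caterpillar among all caterpillars realizing the given degree sequence. The main tool is the elementary fact that in a tree, if $a,b$ are the endpoints of a longest path then $\ecc_T(z)=\max\{d(z,a),d(z,b)\}$ for every $z$, so $\Ecc(T)=\sum_{z\in V(T)}\max\{d(z,a),d(z,b)\}$. Writing $k$ for the number of non-leaf vertices (which is fixed by the degree sequence), a longest path meets at most $k$ internal vertices, so $\diam(T)\le k+1$, with equality exactly for caterpillars; thus caterpillars are precisely the trees of maximum diameter in the class.

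Stage 1 (reduction to a caterpillar). Let $T$ maximize $\Ecc$ in the class and suppose, for contradiction, that $T$ is not a caterpillar. Fix a longest path $P=v_0v_1\cdots v_D$. Since no longest path of a non-caterpillar contains every internal vertex, some vertex lies at distance at least $2$ from $P$; let $v_j$ be an internal vertex of $P$ carrying a hanging subtree $Y$ of height at least $1$, and by the symmetry of $P$ assume $2\le j\le D/2$ (the bound $1+\operatorname{height}(Y)\le\min\{j,D-j\}$ together with $\operatorname{height}(Y)\ge 1$ forces $j\ge 2$). Let $y_1$ be the neighbour of $v_j$ in $Y$. I would apply the degree-preserving $2$-switch that deletes the edges $v_jy_1$ and $v_0v_1$ and inserts $v_0v_j$ and $v_1y_1$: this keeps $T$ a tree and fixes every degree, it reattaches $Y$ at $v_1$ and the end-leaf $v_0$ at $v_j$, and it creates a path through $Y$ that is strictly longer than $P$, so the diameter strictly increases. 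I would then show that $\Ecc$ strictly increases under this move, contradicting maximality; hence every maximizer is a caterpillar.

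Stage 2 (rearrangement among caterpillars). Among caterpillars with the given degree sequence, the spine is a path $z_1\cdots z_k$ on the $k$ internal vertices, and a realization is exactly a bijective assignment $\phi$ of the internal degrees to the positions. Since both spine endpoints are internal, each carries a leaf, so $\diam=k+1$ for every such caterpillar and $\ecc_T(z_i)=1+\ecc_P(z_i)=1+\max\{i-1,k-i\}$, independent of $\phi$. A leaf at $z_i$ then has eccentricity $2+\ecc_P(z_i)$, and writing the number of leaves at $z_i$ as $\phi(z_i)-c_i$ with $c_i\in\{1,2\}$ a position-only constant, one obtains $\Ecc(T)=\mathrm{const}+\sum_{i=1}^{k}\phi(z_i)\bigl(2+\ecc_P(z_i)\bigr)$. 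By the rearrangement inequality this sum is maximized exactly when larger degrees sit at positions of larger $\ecc_P$, which is the defining property of the greedy caterpillar; ties in $\ecc_P$ (positions symmetric about the centre) and in the degrees account for the non-uniqueness already noted.

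The crux is the claim in Stage 1 that the $2$-switch strictly increases $\Ecc$, and I expect this to be the main obstacle, since the diametral endpoints move under the operation, so one cannot simply track a single pair $a,b$. I would organize the verification by partitioning the vertices into the relocated leaf $v_0$, the relocated subtree $Y$, and the fixed remainder, and comparing $\max\{d(z,a),d(z,b)\}$ before and after for each group; the gains from $Y$ and from the far portion of the spine (whose farthest vertex becomes a vertex of $Y$) should dominate the single loss at $v_0$. The bounded-diameter remark also shows the reduction terminates, so iterating the move converts any tree into a caterpillar without decreasing $\Ecc$, after which Stage 2 completes the proof.
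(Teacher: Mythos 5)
Your Stage 2 coincides with the paper's: the eccentricity of a leaf at spine position $z_i$ is $2+\ecc_P(z_i)$, the internal-vertex contribution is independent of $\phi$, and the rearrangement inequality forces larger degrees onto positions of larger $\ecc_P$, which is exactly the greedy caterpillar. The genuine issue is Stage 1, where you use a different transformation from the paper's and leave its key inequality --- that your $2$-switch strictly increases $\Ecc$ --- unproven; you flag it yourself as the crux. The deferred step is also more delicate than your phrase ``the single loss at $v_0$'' suggests: with $t\ge 1$ the height of $Y$ and $2\le j\le D/2$, one has $\ecc_T(v_0)=D$ but $\ecc_{T'}(v_0)=1+\max\{D-j,\,j+t\}$, which can be as small as $D-j+1$, a loss of $j-1$, not of one unit. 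The claim is nonetheless true and your partition does close it: for $z\notin V(Y)\cup\{v_0\}$ the distances to $v_D$ and to $v_1$ are unchanged, so $\ecc_{T'}(z)\ge\max\{d(z,v_D),\,d(z,v_1)+1+t\}\ge\max\{d(z,v_D),\,d(z,v_0)\}=\ecc_T(z)$; each of the at least $t+1$ vertices $y\in V(Y)$ satisfies $\ecc_T(y)=d(y,v_j)+(D-j)$ and $\ecc_{T'}(y)\ge d(y,v_j)+(D-1)$, a gain of at least $j-1$; hence the total gain is at least $(t+1)(j-1)\ge 2(j-1)$, which strictly exceeds the loss of at most $j-1\ge 1$ at $v_0$. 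Without this accounting the contradiction in Stage 1 is not established, so you must supply it.

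For comparison, the paper's reduction to a caterpillar is arranged so that no vertex loses eccentricity: it takes the first spine vertex $u_x$ (measured from an endpoint $u$ of a longest path) having a non-leaf neighbour $w$ off the path, and regrafts all of $w$'s children onto the leaf $u$, in effect swapping the degrees of $u$ and $w$. The longest path is untouched, so every vertex outside the moved subtree keeps at least its old eccentricity, and every moved vertex strictly gains because it now lies beyond a diametral endpoint; no balancing of gains against losses is needed. Your switch buys a strictly increasing diameter (hence an easy termination argument), but at the price of exactly the bookkeeping you postponed.
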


\begin{proof}
Fix a degree sequence $\deg=(d_1, \ldots, d_n)$ which is written in the form described in Definition~\ref{gr_cat}. Let $\mathcal{T}$ be the collection of trees with degree sequence $\deg$. Let $T\in\mathcal{T}$ be a tree such that $\Ecc(T)=\max_{F\in\mathcal{T}} \Ecc(F)$. We first show that $T$ is a caterpillar. 

For contradiction, suppose $T$ is not a caterpillar. Let  $P_T(u,v)=uu_1u_2 \ldots u_{k}v$ be a longest path in $T$. Let $x\in[k]$ be the least integer such that $u_x$ has a nonleaf neighbor $w$ not on $P_T(u,v)$. Because $P$ is a maximum-length path, $x \neq 1$. Let $W$ be the component containing $w$ in $T-\{u_xw\}$. 

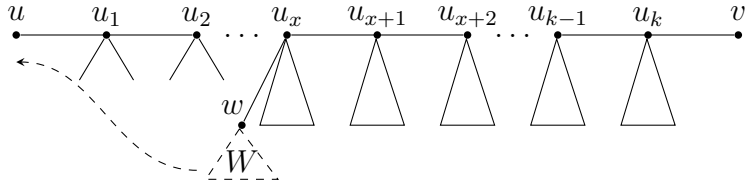
\begin{figure}[htbp]
\centering
    \begin{tikzpicture}[scale=1.2]
        \node[fill=black,circle,inner sep=1pt] (t1) at (0,0) {};
        \node[fill=black,circle,inner sep=1pt] (t2) at (1,0) {};
        \node[fill=black,circle,inner sep=1pt] (t3) at (2,0) {};       
        \node[fill=black,circle,inner sep=1pt] (t4) at (3,0) {};
        \node[fill=black,circle,inner sep=1pt] (t5) at (4,0) {};
        \node[fill=black,circle,inner sep=1pt] (t6) at (5,0) {};
        \node[fill=black,circle,inner sep=1pt] (t7) at (6,0) {};
        \node[fill=black,circle,inner sep=1pt] (t8) at (7,0) {};
        \node[fill=black,circle,inner sep=1pt] (t9) at (8,0) {};

        \node[fill=black,circle,inner sep=1pt] (w) at (2.5,-1) {};
        
        \draw (w)--(t4);
        \draw [dashed] (w)--(2.1, -1.6)--(2.9, -1.6)--cycle;
        \draw (2,-1.5) edge[out=180,in=0,->, >=stealth,dashed] (0,-.3);
        
        \draw (t4)--(2.7, -1)--(3.3, -1)--cycle;
        \draw (t5)--(3.7, -1)--(4.3, -1)--cycle;
        \draw (t6)--(4.7, -1)--(5.3, -1)--cycle;
        
        \draw (t7)--(5.7, -1)--(6.3, -1)--cycle;
        \draw (t8)--(6.7, -1)--(7.3, -1)--cycle;
        
        \draw (t3)--(1.7, -.5);
        \draw (t3)--(2.3, -.5);
        \draw (t2)--(.7, -.5);
        \draw (t2)--(1.3, -.5);
        \draw (t1)--(t3);
        \draw (t4)--(t6);
        \draw (t7)--(t9);
       
        \node at (2.5,0) {$\ldots$};
        \node at (5.5,0) {$\ldots$};
        \node at (0,.24) {$u$};
        \node at (1,.2) {$u_1$};
        \node at (2,.2) {$u_2$};
        \node at (3,.2) {$u_x$};
        \node at (4,.2) {$u_{x+1}$};
        \node at (5,.2) {$u_{x+2}$};
        \node at (6,.2) {$u_{k-1}$};
        \node at (7,.2) {$u_{k}$};
        \node at (8,.24) {$v$};
        
        \node at (2.4,-.8) {$w$};
        \node at (2.5,-1.4) {$W$};

        \end{tikzpicture}
\caption{Generating $T'$ from $T$.}\label{fig:gen_cat1}
\end{figure} 

Create a new tree $T'$ from $T$ by 
 replacing each edge of the form $zw$ in $W$ with the edge $zu$.
(Fig.~\ref{fig:gen_cat1}). Notice that $T$ and $T'$ have the same degree sequence. However, for any vertex $s \in \left(V(T) \setminus V(W)\right) \cup \{w\}$, $\ecc_{T'}(s) \geq \ecc_{T}(s)$ because $P_T(u,v)$ is a longest path in $T$. For any vertex $r\in V(W)-w$, we have
$$ \ecc_{T'}(r) = d(r,u) + d(u,v) > d(u,v) \geq \ecc_T(r) . $$
Thus $ \Ecc(T') > \Ecc(T)$, which contradicts the extremality of $T$. 

Since $T$ is a caterpillar with internal vertices forming path $P=u_1u_2\ldots u_k$, the eccentricity of any internal vertex is independent of the interval vertex degree assignments.
For any $i\in[k]$ and leaf $w$ adjacent to $u_i$,
$$ \ecc_T(w) = \max \{ k-i, i-1 \} + 2. $$
If $\phi: \{u_i\}_{i=1}^k\rightarrow \{d_i\}_{i=1}^k$ is a one-to-one function, then when $k$ is even,
\begin{align*}
\Ecc(T)=\sum_{i=1}^k \ecc_T(u_i) &+ (\phi(u_1)+\phi(u_k))(k+1) + (\phi(u_2)+\phi(u_{k-1}))(k) + \ldots \\ &+ \left(\phi\left(u_{k/2}\right) + \phi\left(u_{(k+2)/2}\right)\right) \left(k/2+2\right) . \end{align*}
In order to maximize the total eccentricity, for $i,j\in[k]$, if $j$ is closer to $k/2$ than $i$, then we should have $\phi(u_i) \geq \phi(u_j)$. It is a greedy caterpillar which achieves this. The case when $k$ is odd is similar.
\end{proof}

 %%%%%%%%%%%%%%%%%%%%%%%%%%%%%%%%%%%%%%%%%%%%%%%%%%%%%%%
\subsubsection{Greedy trees and level-greedy trees}

In this subsection, each tree is rooted at a vertex. (While the root has no bearing on the total eccentricity, we use the added structure to direct our conversation.)
The height of a vertex is the distance to the root and the tree's height, $h(T)$, is the maximum of all vertex heights. We start with some definitions.

\begin{defi}\cite{nina}\label{def:leveldegree} In a rooted tree, the list of multisets $L_i$ of degrees of vertices
at height $i$, starting with $L_0$ containing the degree of the root vertex, is called the {\it level-degree sequence} of the rooted tree.
\end{defi}

Let  $|L_i|$ be the number of entries in $L_i$. It is easy to see that a list of multisets is the level degree sequence of a rooted tree if and only if (1) the multiset $\bigcup_i L_i$ is a tree degree sequence,  (2) $|L_0|=1$, and (3)
$\sum_{d\in L_0} d=|L_{1}|,$ and for all $i\geq 1$, $\sum_{d\in L_i} (d-1)=|L_{i+1}|.$ 

In a rooted tree, the \textit{down-degree} of the root is equal to its degree. The down degree of any other vertex is its degree minus one.

\begin{defi}\cite{nina}\label{def:greedy2} 
Given the level-degree sequence of a rooted tree, the {\it level-greedy rooted tree} for this level-degree sequence is built as follows: 
(1) For each $i\in [n]$, place $|L_i|$ vertices  in level $i$ and to each vertex, from left to right, assign a degree from $L_i$ in non-increasing order. (2) For $i\in [n-1]$, from left to right, join the next vertex in $L_i$ whose down-degree is $d$ to the first $d$ so far unconnected vertices on level $L_{i+1}$. Repeat for $i+1$.
\end{defi}

\begin{defi}\cite{wang_dm}\label{def_greedy}
Given a tree degree sequence $(d_1, d_2, \ldots, d_n)$ in non-increasing order, the {\it greedy tree} for this degree sequence is the level-greedy tree for the level-degree sequence that has $L_0=\{d_1\}$, $L_1=\{d_2, \ldots, d_{d_1+1}\}$ and for each $i>1$, \[|L_i|=\sum_{d\in L_{i-1}} (d-1)\] with every entry in $L_{i}$ at most as large as every entry in $L_{i-1}$.
\end{defi}

Fig.~\ref{greedy_pic} shows a greedy tree with degree sequence 
$ ( 4, 4, 4, 3,3,3,3,3,3,3,2,2, 1, \ldots , 1 ) . $

\begin{figure}[htbp]

\centering
    \begin{tikzpicture}[scale=0.4]
        \node[fill=black,circle,inner sep=1.5pt] (v) at (10,6) {}; 
        \node[fill=black,circle,inner sep=1.5pt] (v1) at (4,4) {};
        \node[fill=black,circle,inner sep=1.5pt] (v2) at (8,4) {};
        \node[fill=black,circle,inner sep=1.5pt] (v3) at (12,4) {};
        \node[fill=black,circle,inner sep=1.5pt] (v4) at (16,4) {};
        \node[fill=black,circle,inner sep=1.5pt] (v11) at (3,2) {};
        \node[fill=black,circle,inner sep=1.5pt] (v12) at (4,2) {};
        \node[fill=black,circle,inner sep=1.5pt] (v13) at (5,2) {};
        \node[fill=black,circle,inner sep=1.5pt] (v21) at (7,2) {};
        \node[fill=black,circle,inner sep=1.5pt] (v22) at (8,2) {};
        \node[fill=black,circle,inner sep=1.5pt] (v23) at (9,2) {};
        \node[fill=black,circle,inner sep=1.5pt] (v31) at (11,2) {};
        \node[fill=black,circle,inner sep=1.5pt] (v32) at (13,2) {};
        \node[fill=black,circle,inner sep=1.5pt] (v41) at (15,2) {};        
        \node[fill=black,circle,inner sep=1.5pt] (v42) at (17,2) {};         

        \node[fill=black,circle,inner sep=1.5pt] (v111) at (2.7,0) {};
        \node[fill=black,circle,inner sep=1.5pt] (v112) at (3.3,0) {};
        \node[fill=black,circle,inner sep=1.5pt] (v121) at (3.7,0) {};
        \node[fill=black,circle,inner sep=1.5pt] (v122) at (4.3,0) {};
        \node[fill=black,circle,inner sep=1.5pt] (v131) at (4.7,0) {};
        \node[fill=black,circle,inner sep=1.5pt] (v132) at (5.3,0) {};
        \node[fill=black,circle,inner sep=1.5pt] (v211) at (6.7,0) {};
        \node[fill=black,circle,inner sep=1.5pt] (v212) at (7.3,0) {};
        \node[fill=black,circle,inner sep=1.5pt] (v221) at (7.7,0) {};
        \node[fill=black,circle,inner sep=1.5pt] (v222) at (8.3,0) {};
        \node[fill=black,circle,inner sep=1.5pt] (v231) at (9,0) {};        
        \node[fill=black,circle,inner sep=1.5pt] (v311) at (11,0) {};

        \draw (v)--(v1);
        \draw (v)--(v2);
        \draw (v)--(v3);
        \draw (v)--(v4);
        \draw (v1)--(v11);
        \draw (v1)--(v12);
        \draw (v1)--(v13);
        \draw (v2)--(v21);
        \draw (v2)--(v22);
        \draw (v2)--(v23);
        \draw (v3)--(v31);
        \draw (v3)--(v32);
        \draw (v4)--(v41);
        \draw (v4)--(v42);
        \draw (v11)--(v111);
        \draw (v11)--(v112);
        \draw (v12)--(v121);
        \draw (v12)--(v122);
        \draw (v13)--(v131);
        \draw (v13)--(v132);
        \draw (v21)--(v211);
        \draw (v21)--(v212);
        \draw (v22)--(v221);
        \draw (v22)--(v222);
        \draw (v23)--(v231);
        \draw (v31)--(v311);
%
%\node at  (10,6.4) {$v$};
%
%\node at  (3.5,4) {$v_1$};
%\node at  (7.5,4) {$v_2$};
%\node at  (12.5,4) {$v_3$};
%\node at  (16.5,4) {$v_4$};
%
%\node at  (2.5,2) {$v_{11}$};
%\node at  (3.5,2) {$v_{12}$};
%\node at  (4.5,2) {$v_{13}$};
%\node at  (6.5,2) {$v_{21}$};
%\node at  (7.5,2) {$v_{22}$};
%\node at  (8.5,2) {$v_{23}$};
%\node at  (10.5,2) {$v_{31}$};
%\node at  (13.5,2) {$v_{32}$};
%\node at  (14.5,2) {$v_{41}$};
%\node at  (17.5,2) {$v_{42}$};

    \end{tikzpicture}

\caption{A greedy tree.}
\label{greedy_pic}
\end{figure}
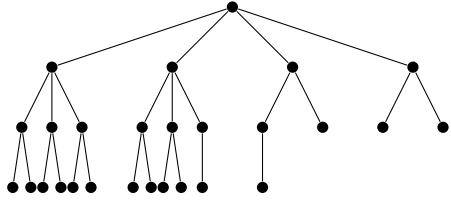

By definition, every greedy tree is level-greedy. 
However, Fig.~\ref{level_greedy_pic} shows a level-greedy tree that is not greedy. It has level degree sequence:
\[\{\{  3 \}, \,\{ 5, 3, 2 \}, \, 
\{ 3,3,3,2,2,1,1 \}, \, \{2,2,1,1,1,1,1,1\}, \,\{1,1\}\}.\]

\begin{figure}[htbp]

\centering
    \begin{tikzpicture}[scale=0.3]
        \node[fill=black,circle,inner sep=1.5pt] (v) at (10,8) {}; 

        \node[fill=black,circle,inner sep=1.5pt] (v1) at (5.5,6) {};
        \node[fill=black,circle,inner sep=1.5pt] (v2) at (12,6) {};
        \node[fill=black,circle,inner sep=1.5pt] (v3) at (15,6) {};

        \node[fill=black,circle,inner sep=1.5pt] (v11) at (1,4) {};
        \node[fill=black,circle,inner sep=1.5pt] (v12) at (4,4) {};
        \node[fill=black,circle,inner sep=1.5pt] (v13) at (7,4) {};
        \node[fill=black,circle,inner sep=1.5pt] (v14) at (9,4) {};
        \node[fill=black,circle,inner sep=1.5pt] (v21) at (11,4) {};
        \node[fill=black,circle,inner sep=1.5pt] (v22) at (13,4) {};
        \node[fill=black,circle,inner sep=1.5pt] (v31) at (15,4) {};

        \node[fill=black,circle,inner sep=1.5pt] (v111) at (0,2) {};
        \node[fill=black,circle,inner sep=1.5pt] (v112) at (2,2) {};
        \node[fill=black,circle,inner sep=1.5pt] (v121) at (3,2) {};
        \node[fill=black,circle,inner sep=1.5pt] (v122) at (5,2) {};
        \node[fill=black,circle,inner sep=1.5pt] (v131) at (6,2) {};
        \node[fill=black,circle,inner sep=1.5pt] (v132) at (8,2) {};
        \node[fill=black,circle,inner sep=1.5pt] (v141) at (9,2) {};
        \node[fill=black,circle,inner sep=1.5pt] (v211) at (11,2) {};

        \node[fill=black,circle,inner sep=1.5pt] (v1111) at (0,0) {};
        \node[fill=black,circle,inner sep=1.5pt] (v1121) at (2,0) {};

        \draw (v)--(v1);
        \draw (v)--(v2);
        \draw (v)--(v3);
        \draw (v1)--(v11);
        \draw (v1)--(v12);
        \draw (v1)--(v13);
        \draw (v1)--(v14);
        \draw (v2)--(v21);
        \draw (v2)--(v22);
        \draw (v3)--(v31);
        \draw (v11)--(v111);
        \draw (v11)--(v112);
        \draw (v12)--(v121);
        \draw (v12)--(v122);
        \draw (v13)--(v131);
        \draw (v13)--(v132);
        \draw (v14)--(v141);
        \draw (v21)--(v211);
        \draw (v111)--(v1111);
        \draw (v112)--(v1121);

%\node at  (10,8.4) {$v$};
%
%\node at  (4.8,6) {$v_1$};
%\node at  (12.5,6) {$v_2$};
%\node at  (15.5,6) {$v_3$};
%
%\node at  (0.4,4) {$v_{11}$};
%\node at  (3.4,4) {$v_{12}$};
%\node at  (6.4,4) {$v_{13}$};
%\node at  (8.4,4) {$v_{14}$};
%\node at  (11.6,4) {$v_{21}$};
%\node at  (13.6,4) {$v_{22}$};
%\node at  (15.6,4) {$v_{31}$};

    \end{tikzpicture}

\caption{A level-greedy tree.}
\label{level_greedy_pic}
\end{figure}
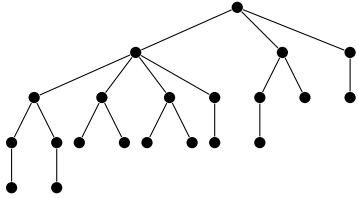

For a fixed degree sequence, greedy trees minimize the sum of distances \cite{nina, wang_dm, zhang2008} and maximize the number of subtrees \cite{eric, zhang2012}. We will show that they also minimize $\Ecc(T)$ among trees with a given degree sequence.

Here we provide some set-up for the proofs of the next two theorems. See Fig.~\ref{fig:prf_level} for an illustration.
Given a tree $T$ rooted at $v$, let $T_1$ be the subtree, rooted at child $v_1$ of $v$, containing some leaves of height $h:=h(T)$. Let $h':=h(T-T_1)$. Then for any vertex $u\in V(T - T_1)$ and any $w\in V(T_1)$ with $h_T(u)=h_T(w)=j$, then  
\begin{eqnarray} \ecc_T(u) = j + h \label{eccT2}, \end{eqnarray}
\begin{eqnarray} \ecc_T(w) = \max \{j + h', \ecc_{T_1}(w) \} \leq j + h \label{eccT1} \end{eqnarray}
where the first is only dependent on the height of $T$ and the second depends only on $h'$ and the structure of $T_1$.

\begin{figure}[htbp]
\centering
    \begin{tikzpicture}[scale=1]
        \node[fill=black,circle,inner sep=1.3pt] (t1) at (0,0) {};
        \node[fill=black,circle,inner sep=1.3pt] (t2) at (-4,-1) {};
        \node[fill=black,circle,inner sep=1pt] (t3) at (-4.5,-1.5) {};       
        \node[fill=black,circle,inner sep=1pt] (t4) at (1.5,-1.5) {};
        \node[fill=black,circle,inner sep=1pt] (t5) at (-5.5,-2.5) {};       
        \node[fill=black,circle,inner sep=1pt] (t6) at (2,-2) {};

        \draw (t1)--(t2);
        \draw (t2)--(-5.5,-2.5)--(-2.5,-2.5)--(t2);
        \draw (t1)--(-2,-2)--(2,-2)--(t1);
        
        \node at  (0,.2) {$v$};
        \node at  (-4.2,-.8) {$v_1$};
        \node at  (-5.6,-1.3) {$w$ (height $j$)};
        \node at  (2.6,-1.3) {$u$ (height $j$)};
        \node at  (-4,-2) {$T_1$};
        \node at  (0,-1.5) {$T-T_1$};
        \node at  (-5.5,-2.8) {(height $h$)};
        \node at  (2,-2.3) {(height $h'$)};
        \end{tikzpicture}
\caption{{A tree rooted at $v$ with $T_1$ a daughter subtree containing leaves of height $h$.}}\label{fig:prf_level}
\end{figure}
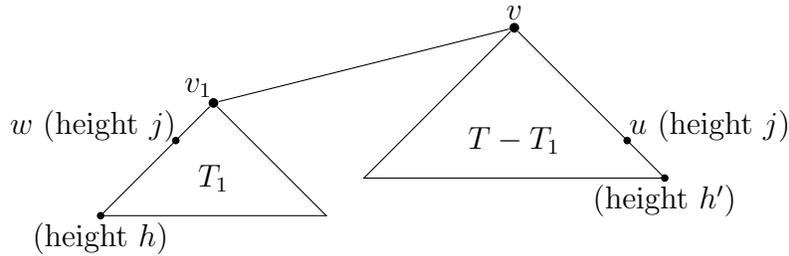

The following lemma implies that the level-greedy tree has the minimum total eccentricity among all rooted trees with a specified level-degree sequence.

\begin{lemma}\label{cl:level}
Let $\ell$ be a non-negative integer. Among the trees with a given level-degree sequence, the level-greedy tree maximizes the number of vertices having eccentricity at most $\ell$.
\end{lemma}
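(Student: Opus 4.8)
The plan is to run an exchange argument on the set of all rooted trees realizing the fixed level-degree sequence $(L_0,L_1,\ldots,L_H)$, showing that any tree that is not level-greedy can be modified, without decreasing the number of vertices of eccentricity at most $\ell$, into one agreeing more closely with the prescription of Definition~\ref{def:greedy2}. First I would record the invariants the level-degree sequence forces: the number $n_i:=|L_i|$ of vertices at each height is fixed, the multiset of degrees at each height is fixed, and in particular the height $h:=h(T)=H$ (the largest index with $L_H\neq\emptyset$) is common to every tree in the class. I would then isolate the quantity that actually governs eccentricity, namely the local height $D(x)$, the distance from $x$ down to the deepest leaf of the subtree rooted at $x$. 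For a vertex $u$ at height $j$, writing $u=u_j,u_{j-1},\ldots,u_0$ for its chain of ancestors, one has
\[
\ecc_T(u)=\max\Bigl( D(u),\ \max_{0\le a<j}\bigl[(j-a)+1+\max_{c\ne u_{a+1}}D(c)\bigr]\Bigr),
\]
where the inner maximum runs over the children $c\neq u_{a+1}$ of $u_a$. Thus $\ecc_T(u)\le\ell$ is a condition purely on the local heights of the subtrees hanging off the root-to-$u$ path, and $N_\ell(T):=|\{u:\ecc_T(u)\le\ell\}|$ is determined entirely by how subtrees of various local heights are distributed among parents.

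Second, I would set up the exchange. Among all realizations choose $T^{*}$ maximizing $N_\ell$, and among those one closest to the level-greedy tree in a fixed top-to-bottom, left-to-right total order on positions. If $T^{*}$ is not level-greedy, there are two vertices $p,q$ at a common height $i$ whose subtrees are assigned in violation of the greedy rule. Because $p$ and $q$ lie at the same height, the subtrees rooted at $p$ and $q$ occupy exactly the same absolute levels, so swapping them (carrying their degrees along) permutes the degree multiset within each level and hence preserves the level-degree sequence. I would choose the swap so that it relocates a shorter subtree to become a sibling of a taller one under the higher-degree parent, precisely the direction the level-greedy construction dictates. The mechanism is that once a short subtree sits next to a tall one, its vertices reach the globally deepest leaves by a short down-and-over path inside their parent's subtree rather than by a long climb to a high ancestor and back down; this can only lower their eccentricities.

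Third, I would verify that such a swap does not decrease $N_\ell$. Here the peel-off formulas \eqref{eccT2} and \eqref{eccT1} are the natural tool: they reduce each vertex's eccentricity to an expression in its height $j$, the global height $h$, the height $h'$ of the complementary part, and a recursively defined within-subtree eccentricity, so that the change in $N_\ell$ under the swap can be tracked subtree by subtree. I would partition the vertices into those inside the two swapped subtrees and those outside, and show that the net number of vertices crossing the threshold $\ell$ is nonnegative; concretely, one hopes to show that the swap toward greedy weakly dominates pointwise on the sorted eccentricity vector, which yields $N_\ell$ monotonicity for \emph{every} $\ell$ simultaneously. This monotonicity step is the main obstacle: the bookkeeping must control the down-reach term $D(\cdot)$ and each of the up-and-over terms for vertices on both sides of the swap at once, and must rule out the greedy direction ever pushing more vertices from eccentricity $\le\ell$ to $>\ell$ than the reverse. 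Once the swap is shown to weakly increase $N_\ell$ while strictly decreasing the distance to the level-greedy tree in the chosen order, it contradicts the extremality of $T^{*}$, forcing $T^{*}$ to be level-greedy and establishing the lemma.
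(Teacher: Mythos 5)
There is a genuine gap: the step you yourself flag as ``the main obstacle'' --- verifying that the swap toward the greedy configuration does not decrease $N_\ell(T)$ --- is the entire content of the lemma, and your proposal only says one ``hopes to show'' it. The difficulty is real, not just bookkeeping. A symmetric swap of two whole subtrees between same-height vertices $p$ and $q$ is \emph{not} one-signed on eccentricities: if the taller subtree moves under $p$, then every vertex in a sibling subtree of $p$'s other children (and more generally every vertex whose path to its farthest leaf previously avoided $p$) can have its eccentricity \emph{increase}, so vertices can cross the threshold $\ell$ in both directions and the claimed pointwise domination of the sorted eccentricity vector is not evident. Your formula for $\ecc_T(u)$ in terms of the local heights $D(\cdot)$ is correct, but it does not by itself show the net change in $N_\ell$ is nonnegative, and no argument is supplied.

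The paper avoids this by making the exchange asymmetric and then recursing. It fixes the child subtree $T_1$ of the root that contains the deepest leaves (height $h$), sets $h'=h(T-T_1)$, and uses the formulas $\ecc_T(u)=j+h$ for $u\in T-T_1$ and $\ecc_T(w)=\max\{j+h',\ecc_{T_1}(w)\}\le j+h$ for $w\in T_1$, both at height $j$. If $\degree(u)>\degree(w)$ for such a pair, it moves $\degree(u)-\degree(w)$ children of $u$ (with their descendants) to $w$, i.e.\ material is only ever pushed \emph{into} the part already containing the deepest leaves. This keeps $h$ fixed, does not increase $h'$, and one checks every vertex's eccentricity weakly decreases, so optimality is preserved for free; the lemma then follows by induction on the number of vertices applied separately to $T_1$ and $T-T_1$, rather than by iterating swaps all the way to the greedy tree. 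If you want to salvage your route, you would need either to restrict your swaps to this one-signed direction or to supply the missing comparison of eccentricity vectors before and after a general swap; as written, the argument is a plan rather than a proof.
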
 

\begin{proof} We proceed by induction on the number of vertices. The base case with one vertex is trivial. 

Fix $\ell>0$. Let $T$ be a rooted tree with the given level-degree sequence and the maximum number of vertices with eccentricity at most $\ell$. (i.e. $T$ is optimal.) 
 For vertices $w\in T_1$ and $u\in T-T_1$, both of height $j$, suppose for contradiction that $\degree(u)>\degree(w)$. Create a new tree $T'$ by moving $\degree(u)-\degree(w)$ children of $u$ and their descendants to adoptive parent $w$. This effectively switches the degrees of $u$ and $w$ while maintaining the level degree sequence. 
 
 While $\ecc_{T'}(u) = \ecc_T(u)$, notice that $h'$ did not increase and neither did $\ecc_T(w)$ for $w \in V(T_1)$. Since $ecc_{T'}(w) \leq \max\{j+h', \ecc_{T_1}(w)\} =\ecc_T(w)$, if strict inequality holds, then we have contradicted the optimality of $T$. Otherwise, $T'$ and $T$ are both optimal trees. In this case, we can repeat this shifting of degrees for pairs of vertices of height 1, followed by pairs of vertices of height 2, and so on until we either meet a contradiction or construct an optimal tree in which $\degree(u) \leq \degree(w)$ for all $w\in T_1$ and $u\in T-T_1$ of the same height. Assume that our optimal $T$ has this property.

Now we have a partition of the level-degree sequence for $T$ into level-degree sequences for $T-T_1$. By the inductive hypothesis, we may assume that both  $T_1$ and $T-T_1$ are level-greedy trees on their level-degree sequences. As a result, $T$ is a level-greedy tree.
\end{proof}

The next theorem also yields a stronger result than merely minimizing total eccentricity among trees with a given degree sequence. 
\begin{theo}
Let $\ell$ be a non-negative integer.  Among the trees with a given degree sequence, the greedy tree maximizes the number of vertices with eccentricity at most $\ell$.
\label{th:greedymin}
\end{theo}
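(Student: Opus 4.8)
The plan is to induct on the number of vertices, using Lemma~\ref{cl:level} to pass from arbitrary trees to level-greedy trees and then to optimize over the choice of level-degree sequence. Fix the degree sequence and let $T$ be a tree realizing it that maximizes $\#\{z : \ecc_T(z) \le \ell\}$. Since eccentricities do not depend on the chosen root, I would root $T$ at any vertex, read off its level-degree sequence, and invoke Lemma~\ref{cl:level}: the level-greedy tree for that sequence has at least as many vertices of eccentricity at most $\ell$, so we may assume $T$ is itself level-greedy. It then remains to show that among level-greedy trees realizing the degree sequence, the greedy tree of Definition~\ref{def_greedy} is optimal, i.e. that the degrees may be assumed distributed from the top down: the maximum degree $d_1$ at the root, and every level-multiset $L_i$ dominating $L_{i+1}$.

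For this remaining, between-level, optimization I would use the deepest-branch decomposition from the set-up (Figure~\ref{fig:prf_level}): write $T = T_1 \cup (T - T_1)$, where $T_1$ is a branch at the root containing a leaf of maximum height $h = h(T)$, and set $h' = h(T - T_1) \le h$. By \eqref{eccT2} every vertex of $T - T_1$ at height $j$ has the forced eccentricity $j + h$, whereas by \eqref{eccT1} every vertex of $T_1$ at height $j$ has eccentricity at most $j + h$. Consequently the shallow part $T - T_1$ contributes vertices of eccentricity at most $\ell$ only for heights $j \le \ell - h$, no matter how it is arranged, while $T_1$ can contribute vertices of strictly smaller eccentricity; this is what forces the height $h$ to be as small as the degree sequence allows and the high degrees to sit near the root. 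I would make this precise by an exchange that takes a larger down-degree occurring at a deeper level together with a smaller one at a shallower level and swaps the corresponding pendant structures, verifying through \eqref{eccT1}--\eqref{eccT2} that $h$ does not increase and that $\#\{z : \ecc_T(z) \le \ell\}$ does not decrease; iterating drives the root degree up to $d_1$ and makes the levels non-increasing. With the root of degree $d_1$ and the deep/shallow split fixed, the inductive hypothesis applied to the strictly smaller trees $T_1$ and $T - T_1$ makes each greedy on its own degree sequence, and the sorted-level property obtained from the exchanges then identifies $T$ with the greedy tree.

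The main obstacle is precisely this between-level exchange, which is genuinely harder than the within-level swap used in Lemma~\ref{cl:level}. There the level-degree sequence was fixed, so an exchange merely permuted degrees within a single level and automatically preserved the object; here moving a down-degree from one level to another changes $|L_{i+1}| = \sum_{d \in L_i}(\degree - 1)$ and hence the sizes of all deeper levels, so one must re-verify that the result is still a tree with the prescribed degree sequence and, more delicately, that pushing a degree toward the root does not spawn a new, even deeper branch that raises $h$ and cancels the gain recorded by \eqref{eccT1}--\eqref{eccT2}. Managing this interaction between the level sizes, the height $h$, and the threshold $\ell$ is where the real work lies; once it is controlled, summing the conclusion over all $\ell$ recovers the minimization of $\Ecc(T)$ as a corollary.
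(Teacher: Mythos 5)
Your outline correctly isolates the hard step, but it does not carry it out: you write that the between-level exchange ``is where the real work lies'' and then defer exactly the verification (that pushing a down-degree toward the root cannot raise $h$ or shrink the count of low-eccentricity vertices) that constitutes the proof. As it stands the proposal is a plan with its central lemma missing, and the reduction you do perform --- root anywhere, read off the level-degree sequence, and invoke Lemma~\ref{cl:level} --- buys little, since the whole difficulty is deciding \emph{which} level-degree sequence is optimal.

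The paper's device that dissolves this obstacle, and which your argument lacks, is the choice of root: root $T$ at a center vertex of a longest path. Then the deepest branch $T_1$ and the remainder satisfy $h'\in\{h-1,h\}$, and for $w\in V(T_1)$ at height $j$ one has $\ecc_{T_1}(w)\le (j-1)+(h-1)\le j+h'-1$, so \eqref{eccT1} collapses to the exact formula $\ecc_T(w)=j+h'$, while \eqref{eccT2} gives $\ecc_T(u)=j+h$ in $T-T_1$. Every eccentricity is thus an explicit affine function of height, and optimality of $T$ directly forces $\degree(x)\ge\degree(y)$ whenever $\ecc_T(x)<\ecc_T(y)$, via a swap of subtrees inside the fixed tree rather than a modification of the level-degree sequence. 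A short case analysis on $h'=h$ versus $h'=h-1$ (rerooting at $v_1$ in the latter case if needed) then shows degrees are non-increasing in height, which pins the level-degree sequence down to that of the greedy tree of Definition~\ref{def_greedy}; Lemma~\ref{cl:level} finishes. Without the center root, eccentricities are not determined by height alone, and your proposed exchange must indeed track how moving one down-degree reshuffles all deeper level sizes and possibly the height --- precisely the interaction you acknowledge you have not controlled.
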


\begin{proof}
Let $T$ be a tree with the given degree sequence with the maximum number of vertices with eccentricity at most $\ell$. (i.e. $T$ is optimal.) 
{Many times we will use the following claim: For two vertices $u$ and $v$ with $h(u) < \ell \leq h(v)$, it is preferable to assign degrees such that $\degree(u) \geq \degree(v)$ in order to maximize the number of vertices with height at most $\ell$.}

Find a longest path in $T$ and root $T$ at a center vertex  $v$ of that path. In $T-\{v\}$, let $T_1$ be the component with the leaf of greatest height. Let $v_1$ be the child of $v$ in $T_1$. By our choice of the root, if $h$ is the height of $T_1$, then the height of $T-T_1$ has height $h'\in \{h-1,h\}$. 
Now for any $w\in V(T_1)$ with $h_T(w)=j$, we have $\ecc_{T_1}(w)\leq (j-1)+(h-1)\leq j+h'-1$. In light of \eqref{eccT1},
\begin{equation*}\ecc_T(w)=\max\{j+h',\ecc_{T_1}(w)\}=j+h'.\label{Thm4.4ecc}\end{equation*}
For $w,x\in V(T_1)$, if $h_T(w) < h_T(x)$ then, by our earlier claim, $\ecc_T(w) < \ecc_T(x)$ which implies $\degree(w)\geq \degree(x)$ in $T$ because $T$ maximizes the number of vertices with small eccentricities. 

Vertices in $T-T_1$ with height $j$ have eccentricity $j+h$ by \eqref{eccT2}. So for $u,v\in V(T-T_1)$, when $h_T(u) < h_T(v)$, we can conclude $\degree(u)\geq \degree(v)$ in $T$. 

These observations establish the fact that either the root of $T-T_1$ or the root of $T_1$ has the largest degree in $T$. 

We now examine two cases based upon the value of $h'$.  When $h=h'$, we have  $\ecc_T(w)=j+h=\ecc_T(u)$ for any $w\in V(T_1)$, $u\in V(T-T_1)$ with $h_T(w)=h_T(u)=j$. Therefore, for $x,y\in V(T)$, if $h_T(x) < h_T(y)$, then $\degree(x)\geq \degree(y)$ in $T$. As an immediate consequence, the root of $T$ has the largest degree.

When $h'=h-1$, we may assume that the root of $T$ has the largest degree, for otherwise, we could reroot $T$ at $v_1$ which would not change the vertex eccentricities or the difference between $h$ and $h'$. 
Continuing in the setting with $h'=h-1$, for $w\in V(T_1)$ and $u,y\in V(T-T_1)$, if 
$h_T(w)=h_T(u)$, then $\ecc_T(w)=\ecc_T(u)-1$. So $\degree(w)\geq \degree(u)$ in $T$. However, if
$h_T(w)\geq h_T(y)+1$, then $\ecc_T(w)\geq \ecc_T(y)$. So we may assume $\degree(w)\leq \degree(y)$ in $T$. 

In both cases, we may assume that vertices of smaller height have larger degrees. Consequently, this  determines the level degree sequence of $T$. In fact, this is the level degree sequence for the greedy tree. The previous lemma asserts that we can assume $T$ is level-greedy. Therefore, $T$ is the greedy tree.
\end{proof}

\begin{rem}
Such extremal trees are not necessarily unique. In fact, the greedy tree gave a  much stronger restriction than what we needed, as stated in the theorem, while still not being the unique structure.
\end{rem}

 %%%%%%%%%%%%%%%%%%%%%%%%%%%%%%%%%%%%%%%%%%%%%%%%%%%%%%%
\subsubsection{Greedy trees with different degree sequences}

As a final remark on greedy trees, given a collection of degree sequences, we order the corresponding greedy trees by their total eccentricity. The following observations, similar to previous works on other indices, yields many extremal results as immediate corollaries. For an example of such applications see \cite{zhang2012}.

\begin{defi}
Given two non-increasing sequences in $\mathbb{R}^n$, $\pi'=(d_1',\cdots,
d'_{n})$ and
 $\pi''=(d''_1, \cdots, d''_{n})$, $\pi''$ is said to {\it majorize}  $\pi'$, denoted $ \pi'\triangleleft \pi'' ,$ if for $k\in [n-1]$
 \begin{eqnarray*}
  \sum_{i=0}^{k}d'_i\le\sum_{i=0}^k d''_i \qquad \text{ and } \qquad \sum_{i=0}^{n}d'_i=\sum_{i=0}^{n}d''_i. \end{eqnarray*}
\end{defi}

\begin{lemma}\textup{\cite{wei1982}}\label{lem:wei} 
 Let $\pi'=(d'_1, \cdots d'_{n})$ and $\pi''=(d''_1, \cdots,
 d''_{n})$ be two non-increasing tree degree sequences. If
 $\pi'\triangleleft \pi'',$ then there exists a series of
 (non-increasing) tree degree sequences  $\pi^{(i)}=(d_1^{(i)}, \ldots, d_{n}^{(i)})$ for $1\leq i\leq m$ such that
 \[\pi' =\pi^{(1)}\triangleleft \pi^{(2)} \triangleleft \cdots \triangleleft \pi^{(m-1)} \triangleleft\pi^{(m)}= \pi''.\]
  In addition, each $\pi^{(i)}$ and $\pi^{(i+1)}$
differ at exactly two entries, say the $j$ and $k$ entries, $j<k$ where $d_j^{(i+1)} = d_j^{(i)} +1$ and $d_k^{(i+1)} = d_k^{(i)} -1$.
\end{lemma}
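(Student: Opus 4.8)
The plan is to prove the standard majorization fact that $\pi' \triangleleft \pi''$ can be realized by a chain of single-unit transfers, while verifying at every step that the intermediate sequence remains a valid tree degree sequence, i.e.\ positive integers summing to $2(n-1)$ listed in non-increasing order. Write $S_m' = \sum_{i=1}^m d_i'$ and $S_m'' = \sum_{i=1}^m d_i''$, and set $D_m := S_m'' - S_m' \ge 0$; majorization is exactly the statement that $D_m \ge 0$ for all $m$ with $D_0 = D_n = 0$. I would induct on the nonnegative integer $\Phi := \sum_{m=1}^{n-1} D_m$, which vanishes precisely when $\pi' = \pi''$ (the base case, where the trivial chain suffices). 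For the inductive step I will exhibit a single sequence $\sigma$, obtained from $\pi'$ by adding $1$ to some entry $p$ and subtracting $1$ from a later entry $q$, that is again a non-increasing tree degree sequence with $\sigma \triangleleft \pi''$ and $\Phi(\sigma) < \Phi(\pi')$; applying the inductive hypothesis to the pair $(\sigma, \pi'')$ and prepending $\pi'$ yields the desired chain, and the displacement $q-p$ records that consecutive terms differ in exactly the two claimed coordinates.

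Assume $\pi' \ne \pi''$ and let $j$ be the least index with $d_j' \ne d_j''$. Since the partial sums of $\pi''$ dominate those of $\pi'$, the first discrepancy must be a deficit, $d_j' < d_j''$, with $d_i' = d_i''$ for $i < j$. This gives free monotonicity on the left: either $j = 1$, or $d_{j-1}' = d_{j-1}'' \ge d_j'' > d_j'$, so raising entry $j$ by $1$ cannot break the non-increasing order there; I therefore take $p := j$ as the ``add'' position. For the ``subtract'' position, let $[j,t]$ be the maximal run of indices with $D_m \ge 1$ (so $D_{t+1} = 0$ and $t \le n-1$), and choose $q$ with $j < q \le t+1$ at which the sequence steps strictly down, meaning $d_q' > d_{q+1}'$ or $q = n$. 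Subtracting $1$ at such a $q$ preserves the order, and since $[p,q-1] \subseteq [j,t]$ every affected partial sum has $D_m \ge 1$, so after the transfer $D_m$ drops only to $D_m - 1 \ge 0$ and $\sigma \triangleleft \pi''$ is retained. Positivity is automatic: if $q < n$ then $d_q' > d_{q+1}' \ge 1$ gives $d_q' \ge 2$, while if $q = n$ then $D_{n-1} \ge 1$ forces $d_n' > d_n'' \ge 1$, again $d_q' \ge 2$; and the sum is preserved by construction, so $\sigma$ is a tree degree sequence with $\Phi(\sigma) = \Phi(\pi') - (q-p) < \Phi(\pi')$.

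The one genuine obstacle is guaranteeing that a step-down index $q$ exists inside $(j, t+1]$: here sortedness pushes the subtraction toward the right end of an equal-value block, whereas majorization forbids pushing it so far that some $D_m$ reaches $0$. I would resolve this by contradiction. If no such $q$ existed, then every $q \in \{j+1,\dots,t+1\}$ satisfies $q \ne n$ and $d_q' = d_{q+1}'$, whence $t+1 < n$ and $d_{j+1}' = d_{j+2}' = \cdots = d_{t+2}'$. Now $D_{t+1} = 0 < D_t$ forces $d_{t+1}'' - d_{t+1}' = D_{t+1} - D_t \le -1$, i.e.\ $d_{t+1}' \ge d_{t+1}'' + 1$, while $D_{t+2} \ge 0 = D_{t+1}$ forces $d_{t+2}'' \ge d_{t+2}' = d_{t+1}' \ge d_{t+1}'' + 1$, contradicting that $\pi''$ is non-increasing (where $d_{t+1}'' \ge d_{t+2}''$). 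Hence $q$ exists, the transfer is legal, and the induction closes. Because the add at $j$ and the subtract at $q$ were both placed at block boundaries, the listed sequence stays literally non-increasing throughout, so the produced chain $\pi' = \pi^{(1)} \triangleleft \cdots \triangleleft \pi^{(m)} = \pi''$ has exactly the prescribed two-coordinate local form.
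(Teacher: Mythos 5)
The paper does not actually prove this lemma: it is quoted from Wei (1982), and the subsequent remark merely asserts, without argument, that the statement is a ``more refined version'' of the original in which every intermediate sequence stays positive and non-increasing without rearrangement. Your proof is correct and, in effect, supplies exactly the justification the paper omits. The induction on the potential $\Phi=\sum_{m=1}^{n-1}(S''_m-S'_m)$ is a standard and clean way to organize the chain of unit transfers; the genuinely delicate point is the one you isolate, namely that a legal ``subtract'' position $q$ exists inside $(j,t+1]$ so that sortedness, positivity, and majorization are all preserved simultaneously, and your contradiction argument (an equal-value run $d'_{j+1}=\cdots=d'_{t+2}$ forces $d''_{t+2}\geq d'_{t+1}\geq d''_{t+1}+1$, violating monotonicity of $\pi''$) closes it correctly. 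The boundary cases check out as well: $q=n$ forces $D_{n-1}\geq 1$ and hence $d'_n\geq 2$, and the left neighbor of the add position satisfies $d'_{j-1}=d''_{j-1}\geq d''_j>d'_j$, so no rearrangement is ever needed. The only cosmetic gap is that you never explicitly note $\pi'\triangleleft\sigma$, but this is immediate since a unit transfer from a later to an earlier coordinate only increases partial sums; with that remark added, your argument is a complete, self-contained proof of the refined statement the paper relies on.
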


\begin{rem}
Lemma~\ref{lem:wei} is a {more refined version} of the original statement in \cite{wei1982}. In this process, each entry stays positive and the degree sequences remain non-increasing. Thereby, each obtained sequence is a tree degree sequence that is non-increasing without rearrangement. 
\end{rem}

\begin{theo}
Given two tree degree sequences $\pi'$ and $\pi''$ such that $\pi' \triangleleft \pi''$, \[\Ecc(T_{\pi'}^*)\geq \Ecc(T_{\pi''}^*)\] where $T_{\nu}^*$ is the greedy tree for degree sequence $\nu$. 
\end{theo}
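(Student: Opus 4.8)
The plan is to combine the single-step refinement of majorization from Lemma~\ref{lem:wei} with the extremality of greedy trees established in Theorem~\ref{th:greedymin}. By Lemma~\ref{lem:wei} it suffices to treat the case in which $\pi''$ is obtained from $\pi'=(d_1,\ldots,d_n)$ by raising one entry and lowering another: there are indices $j<k$ with $d_j\ge d_k$ such that $\pi''$ agrees with $\pi'$ except that its $j$-th entry is $d_j+1$ and its $k$-th entry is $d_k-1$ (both remaining positive, both sequences non-increasing). For the general case one chains the one-step inequalities along $\pi'=\pi^{(1)}\triangleleft\cdots\triangleleft\pi^{(m)}=\pi''$ to obtain $\Ecc(T_{\pi'}^*)\ge\cdots\ge\Ecc(T_{\pi''}^*)$.

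For the one-step case my strategy is to exhibit a single tree $F$ with degree sequence $\pi''$ satisfying $\Ecc(F)\le\Ecc(T_{\pi'}^*)$; Theorem~\ref{th:greedymin} then closes the argument, since the greedy tree minimizes total eccentricity among all trees realizing $\pi''$, whence $\Ecc(T_{\pi''}^*)\le\Ecc(F)\le\Ecc(T_{\pi'}^*)$. To build $F$, I would root $T_{\pi'}^*$ at a center vertex and use the defining feature of the greedy tree that degrees are non-increasing as the height increases, so that vertices carrying the larger degree $d_j$ sit at heights no greater than those carrying $d_k$. I would choose a vertex $a$ of degree $d_j$ and a vertex $b$ of degree $d_k$ with $h(a)\le h(b)$ (so $a$ is not a descendant of $b$), detach one child-branch $B$ from $b$, and reattach its root as a new child of $a$. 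The result $F$ has exactly the degree sequence $\pi''$, and relocating $B$ moves its deepest vertex from height $h(b)+1+h(B)$ to height $h(a)+1+h(B)$; hence the height of $F$ does not exceed that of $T_{\pi'}^*$.

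The crux — and the step I expect to be the main obstacle — is verifying $\Ecc(F)\le\Ecc(T_{\pi'}^*)$, because the relocation is not monotone vertex-by-vertex: pulling $B$ toward the center shortens every longest path that passed through the edge at $b$, but it simultaneously lengthens the distance from the vertices left behind in $b$'s subtree to the vertices of $B$. The plan is to control this by choosing $a$ and $B$ carefully — relocating along the path toward the center, so that $B$ genuinely rises toward the root rather than crossing into an unrelated branch, and, when possible, taking $B$ to be a shallowest child-branch of $b$ — and then to bound the net change in the sum. Here I would lean on the eccentricity description underlying \eqref{eccT2} and \eqref{eccT1}: in a center-rooted greedy tree the eccentricity-realizing leaves lie in the deepest branches, so the distance increases created by the move never involve a vertex that realizes the eccentricity of any $z$, while the decrease in the tree's height strictly lowers the eccentricities of the vertices in the deepest branch. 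Summing, the increases are dominated by the decreases, giving $\Ecc(F)\le\Ecc(T_{\pi'}^*)$. Should the bookkeeping prove awkward, an alternative route is to work with $N_T(\ell):=\#\{z:\ecc_T(z)\le\ell\}$ and the identity $\Ecc(T)=\sum_{\ell\ge0}\bigl(n-N_T(\ell)\bigr)$, proving the stronger level-wise statement $N_{T_{\pi''}^*}(\ell)\ge N_{T_{\pi'}^*}(\ell)$ for every $\ell$ by comparing, level by level, how fast the two greedy constructions fill the low heights — here majorization of partial degree sums feeds directly into the induction.
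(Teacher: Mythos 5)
Your proposal follows the paper's proof essentially step for step: reduce via Lemma~\ref{lem:wei} to a single unit transfer between two entries, relocate one child-branch from the lower-degree vertex to the higher-degree (hence shallower) vertex to obtain an intermediate tree realizing $\pi''$ in which no vertex height increases, bound its total eccentricity by $\Ecc(T_{\pi'}^*)$ with an argument in the spirit of Lemma~\ref{cl:level}, and finish by invoking Theorem~\ref{th:greedymin}. The step you flag as the crux is exactly the one the paper also treats only by appeal to ``an argument similar to Lemma~\ref{cl:level},'' so your level of detail there matches the published argument.
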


\begin{proof}
According to Lemma~\ref{lem:wei}, it suffices to compare the total eccentricity of two greedy trees whose degree sequences differ in two entries, each by exactly 1, i.e., assume
$$ \pi'=(d'_1, \cdots d'_{n}) \triangleleft (d_1'', \cdots, d_{n}'')=\pi'' $$ 
 with $d''_j = d'_j +1$, $d''_k = d'_k -1$ for some $j<k$ and all other entries the same.

Let $u$ and $v$ be the vertices corresponding to $d'_j$ and $d'_k$ respectively and $w$ be a child of $v$ in $T_{\pi'}^*$ (Fig.~\ref{fig:pi}).
Construct $T_{\pi''}$ from $T_{\pi'}^*$ by removing the edge $vw$ and adding edge $uw$. Note that $T_{\pi''}$ has degree sequence $\pi''$ and by Theorem~\ref{th:greedymin}
$$ \Ecc(T_{\pi''}^*) \leq \Ecc(T_{\pi''}) . $$

\begin{figure}[htbp]
\centering
    \begin{tikzpicture}[scale=1.5]
        \node[fill=black,circle,inner sep=1pt] (t1) at (0,0) {};
        \node[fill=black,circle,inner sep=1pt] (t2) at (-.5,-.4) {};
        \node[fill=black,circle,inner sep=1pt] (t3) at (.5,-.4) {};
        \node[fill=black,circle,inner sep=1pt] (t4) at (-.8,-.8) {};
        \node[fill=black,circle,inner sep=1pt] (t5) at (-.2,-.8) {};
        \node[fill=black,circle,inner sep=1pt] (t6) at (.2,-.8) {};
        \node[fill=black,circle,inner sep=1pt] (t7) at (.8,-.8) {};
       
        \node[fill=black,circle,inner sep=1pt] (t20) at (-1.7,-1.2) {};
        \node[fill=black,circle,inner sep=1pt] (t21) at (-1.9,-1.2) {};
        \node[fill=black,circle,inner sep=1pt] (t22) at (-1.5,-1.2) {};
        \node[fill=black,circle,inner sep=1pt] (t23) at (-1.2,-1.2) {};
              
        \node[fill=black,circle,inner sep=1pt] (t13) at (1.5,-.4) {};
        \node[fill=black,circle,inner sep=1pt] (t14) at (-1.5,-.4) {};
        \node[fill=black,circle,inner sep=1pt] (t15) at (-1.8,-.8) {};
        \node[fill=black,circle,inner sep=1pt] (t16) at (-1.2,-.8) {};
        \node[fill=black,circle,inner sep=1pt] (t17) at (1.2,-.8) {};
        \node[fill=black,circle,inner sep=1pt] (t18) at (1.8,-.8) {};
        \node[fill=black,circle,inner sep=1pt] (t19) at (-1.5,-.8) {};
        
        \draw (t1)--(t13);
        \draw (t1)--(t14);
        \draw (t13)--(t17);
        \draw (t13)--(t18);
        \draw (t14)--(t15);
        \draw (t14)--(t16);
        \draw (t14)--(t19);
        
        \draw [line width=1mm] (t15)--(t20);
        \draw (t15)--(t21);
        \draw (t19)--(t22);
        \draw (t16)--(t23);

        \draw (t1)--(t2);
        \draw (t1)--(t3);
        \draw (t2)--(t4);
        \draw (t2)--(t5);
        \draw (t3)--(t6);
        \draw (t3)--(t7);
          
        \node[fill=black,circle,inner sep=1pt] (s1) at (4,0) {};
        \node[fill=black,circle,inner sep=1pt] (s2) at (3.5,-.4) {};
        \node[fill=black,circle,inner sep=1pt] (s3) at (4.5,-.4) {};
        \node[fill=black,circle,inner sep=1pt] (s4) at (3.2,-.8) {};
        \node[fill=black,circle,inner sep=1pt] (s5) at (3.8,-.8) {};
        \node[fill=black,circle,inner sep=1pt] (s6) at (4.2,-.8) {};
        \node[fill=black,circle,inner sep=1pt] (s7) at (4.8,-.8) {};
        
        \node[fill=black,circle,inner sep=1pt] (s20) at (3.5,-.8) {};
        \node[fill=black,circle,inner sep=1pt] (s21) at (2.8,-1.2) {};
        \node[fill=black,circle,inner sep=1pt] (s22) at (2.5,-1.2) {};
        \node[fill=black,circle,inner sep=1pt] (s23) at (2.2,-1.2) {};       
        
        \node[fill=black,circle,inner sep=1pt] (s13) at (5.5,-.4) {};
        \node[fill=black,circle,inner sep=1pt] (s14) at (2.5,-.4) {};
        \node[fill=black,circle,inner sep=1pt] (s15) at (2.2,-.8) {};
        \node[fill=black,circle,inner sep=1pt] (s16) at (2.8,-.8) {};
        \node[fill=black,circle,inner sep=1pt] (s17) at (5.2,-.8) {};
        \node[fill=black,circle,inner sep=1pt] (s18) at (5.8,-.8) {};
        \node[fill=black,circle,inner sep=1pt] (s19) at (2.5,-.8) {};
              
        \draw (s1)--(s13);
        \draw (s1)--(s14);
        \draw (s13)--(s17);
        \draw (s13)--(s18);
        \draw (s14)--(s15);
        \draw (s14)--(s16);
        \draw (s14)--(s19);
        
        \draw [line width=1mm] (s2)--(s20);
        \draw (s16)--(s21);
        \draw (s19)--(s22);
        \draw (s15)--(s23);

        \draw (s1)--(s2);
        \draw (s1)--(s3);
        \draw (s2)--(s4);
        \draw (s2)--(s5);
        \draw (s3)--(s6);
        \draw (s3)--(s7);
       
        \node at (-1.8,-.65) {$v$};
        \node at (-1.7,-1.35) {$w$};
                
        \node at (3.5,-.25) {$u$};
        \node at (3.5,-.95) {$w$};                
        
        \node at (0,-1.6) {$T_{\pi'}^*$};
        \node at (4,-1.6) {$T_{\pi ''}$};
     \end{tikzpicture}
\caption{ {\small $\pi = (4,4,3,3,3,3,2,2,1,\ldots,1)$ and 
$\pi' = (4,4,4,3,3,2,2,2,1,\ldots,1)$}.}\label{fig:pi}
\end{figure}
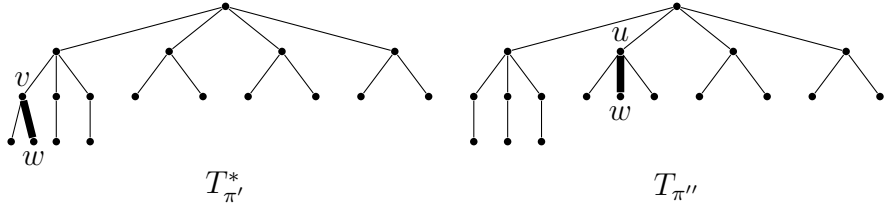

The height of any vertex in $T_{\pi''}$ is at most that of its counterpart in $T_{\pi'}^*$. An argument similar to that used in the proof of Lemma \ref{cl:level} shows
\begin{equation}\label{eq:com}
\Ecc(T_{\pi''}) \leq \Ecc(T_{\pi'}^*) .
\end{equation}
Hence
$ \Ecc(T_{\pi''}^*) \leq \Ecc(T_{\pi''}) \leq \Ecc(T_{\pi'}^*).$
\end{proof}

\begin{rem}
As in the proof of the extremality of greedy trees, equality holds more often in \eqref{eq:com} compared with its analogue for many other graph invariants. This also serves as some indication that $\Ecc(T)$ is not as strong of a graph invariant as compared to others in terms of characterizing the structures. 
\end{rem}

By comparing greedy trees with different degree sequences, the extremality of trees with respect to minimizing $\Ecc(.)$ under various restrictions easily follows. Consider, for example, trees with a given number of vertices and exactly $\ell$ leaves. The degree sequence of such a tree has exactly $\ell$ of 1's, where the degree sequence $(\ell, 2, \ldots , 2, 1, \ldots , 1)$ majorizes all other possible degree sequences. The corresponding greedy tree is a ``star-like" tree (a subdivision of star). Similarly, for trees with a given number of vertices and maximum degree $k$, the degree sequence $(k,k, \ldots , k, \ell , 1, \ldots 1)$ majorizes all other degree sequences with maximum degree $k$, where $\ell$ is the unique degree that is possibly between 1 and $k$. The corresponding greedy tree is called the ``extended good $k$-ary" tree. See for instance, \cite{bart, zhang2012} for details.

\section*{Acknowledgements}
L\'aszl\'o Sz\'ekely was supported in part by the  NSF DMS grant 1300547 and by the DARPA and AFOSR under the contract FA9550-12-1-0405.
Hua Wang was supported in part by the Simons Foundation (\#245307).
The authors wish to thank the referees for their ideas on how to shorten this paper and the proofs.

%%%%%%%%%%%%%%%%%%%%%%%%%%%%%%%%%%%%%%%%%%%%%%%%%%%%%%%
\bibliography{Bibliography}{}
 \bibliographystyle{plain} 

\end{document}